\newtheorem{thm}{Theorem}[section]
\newtheorem{prob}{Problem}[section]
\newtheorem{remark}{Remark}[section]
\newtheorem{prop}{Proposition}[section]
\newtheorem{defn}{Definition}[section]
\newtheorem{example}{Example}[section]
\newcounter{nextauthor}
\def\mathrm{\mbox}
\title{{\Large \bf Linear-quadratic Stochastic Stackelberg Differential Games with Affine Constraints}\thanks{This work was supported by the National Natural Science Foundation of China (12171339), the Scientific and Technological Research Program of Chongqing Municipal Education Commission, the grant from Chongqing Technology and Business University (2356004), and  the Fundamental Research Funds for the Central Universities (2682023CX071).}}
\begin{document}
\title{{\Large \bf Linear-quadratic Stochastic Stackelberg Differential Games with Affine Constraints}\thanks{This work was supported by the National Natural Science Foundation of China (12171339), the Scientific and Technological Research Program of Chongqing Municipal Education Commission, the grant from Chongqing Technology and Business University (2356004), and  the Fundamental Research Funds for the Central Universities (2682023CX071).}}
\author[a]{Zhun Gou}
\author[b]{Nan-Jing Huang \thanks{Corresponding author: nanjinghuang@hotmail.com; njhuang@scu.edu.cn}}
\author[a]{Xian-Jun Long}
\author[c]{Jian-Hao Kang}
\affil[a]{\small\it School of Mathematics and Statistics, Chongqing Technology and Business University, Chongqing 400067, P.R. China; Chongqing Key Laboratory of Statistical Intelligent Computing and Monitoring, Chongqing Technology and Business University, Chongqing, 400067, P.R. China}
\affil[b]{Department of Mathematics, Sichuan University, Chengdu, Sichuan 610064, P.R. China}
\affil[c]{School of Mathematics, Southwest Jiaotong University, Chengdu, Sichuan 610031, P.R. China}
\date{}
\maketitle
\begin{center}
\begin{minipage}{5.6in}
\noindent{\bf Abstract.} This paper investigates the non-zero-sum linear-quadratic stochastic Stackelberg differential games with affine constraints, which depend on both the follower's response and the leader's strategy. With the help of the stochastic Riccati equations and the Lagrangian duality theory, the feedback expressions of optimal strategies of the follower and the leader are obtained and the dual problem of the leader's problem is established.  Under the Slater condition, the equivalence is proved between the solutions to the dual problem and the leader's problem, and the KKT condition is also provided for solving the dual problem. Then, the feedback Stackelberg equilibrium is provided for the linear-quadratic stochastic Stackelberg differential games with affine constraints, and a new positive definite condition is proposed for  ensuring the  uniqueness of solutions to the dual problem. Finally, two non-degenerate examples with indefinite coefficients are provided to illustrate and to support our main results.
\\ \ \\
{\bf Keywords:} Linear-quadratic; Stackelberg differential game; Affine constraints; Feedback Stackelberg equilibrium; KKT condition.
\\ \ \\
{\bf 2020 Mathematics Subject Classification}: 49N70, 91A15, 91A65, 93B52.
\end{minipage}
\end{center}

\section{Introduction}
\paragraph{}

\noindent Let $\mathbb{R}^{n\times m}$ be the Euclidean space of  $n\times m$-matrices $\Sigma$ equipped with the norm $\|\Sigma\|_{\mathbb{R}^{n\times m}}=tr^{\frac{1}{2}}(\Sigma^{\top}\Sigma)$ and inner product $\langle\Sigma,\widetilde{\Sigma}\rangle_{\mathbb{R}^{n\times m}}=tr(\Sigma^{\top}\widetilde{\Sigma})$ ($\widetilde{\Sigma}\in \mathbb{R}^{n\times m}$), where $tr(\cdot)$ and $(\cdot)^{\top}$ represent the trace of an $n\times n$-matrix and the  transpose of an $n\times m$-matrix, respectively. Let $I_n$ be the $n\times n$-identity matrix, $0^{m\times n}$ be an $m\times n$-null matrix, $\mathcal{S}^n$ the subset of matrices of $\mathbb{R}^{n\times n}$,  and $\mathcal{S}_{++}^n$ the subset of strictly positive definite  matrices of $\mathcal{S}^n$.

The current paper considers the following controlled linear stochastic differential equation (SDE, for short) in a complete filtered probability space $(\Omega,\mathfrak{F},\mathfrak{F}_t,\mathbb{P})$ satisfying the usual hypothesis:
\begin{equation}\label{SDE1}
\begin{cases}
dX(t)=\left[A(t)X(t)+B_1(t)u_1(t)+B_2(t)u_2(t)\right]dt+C(t)X(t)dB(t), \quad t\in[s,T],\\
X(s)=\xi\in \mathbb{R}^n,
\end{cases}
\end{equation}
where $[s,T]$ denotes a finite time duration with $T>s \geq0$; $X(\cdot)$ is the $\mathbb{R}^n$-valued state variable; $u_1(\cdot)$ and $u_2(\cdot)$ are the strategies ($\mathbb{R}^m$-valued control variables) of the follower and the leader, respectively; $\{B(t)\}_{t\in[0,T]}$ is a standard one-dimensional Brownian motion which generates the right-continuous and increasing $\sigma$-algebra $\mathcal{F}=\{\mathcal{F}_t\}_{t\in[0,T]}$; $A,C:[0,T]\times\Omega\rightarrow\mathbb{R}^{n\times n}$, $B_1,B_2\in[0,T]\times\Omega\rightarrow\mathbb{R}^{n\times m}$ are all bounded and $\mathcal{F}$-progressively measurable coefficients. For any given Euclidean space $\mathcal{H}$ and $p,q\in[1,+\infty)$, we make use of the following notations throughout this paper.
\begin{itemize}
\item $\mathcal{L}^p([s,T],\mathcal{H})$: the space of all measurable functions $f:[s,T]\rightarrow \mathcal{H}$ with $\int_s^T\|f(t)\|^p_{\mathcal{H}}dt<\infty$.

\item $\mathcal{L}^p_{\mathcal{F}_{t}}(\Omega,\mathcal{H})$: the space of all $\mathcal{F}_{t}$-measurable random variables $\eta:\Omega\rightarrow \mathcal{H}$ with $\mathbb{E}[\|\eta\|_{\mathcal{H}}^p]<\infty$.

\item $\mathcal{L}^p_{\mathcal{F}}(\Omega,C([s,T],\mathcal{H}))$: the Banach space of all $\mathcal{F}$-progressively measurable  processes $X:[s,T]\times \Omega\rightarrow \mathcal{H}$ with
$\mathbb{E}\left[\sup \limits_{t\in[s,T]}\|X(t)\|^p_{\mathcal{H}}\right]<\infty$.

\item  $\mathcal{L}^{\infty}_{\mathcal{F}}(\Omega,\mathcal{L}^p([s,T],\mathcal{H}))$: the Banach space of all $\mathcal{F}$-progressively measurable  processes $X:[s,T]\times \Omega\rightarrow \mathcal{H}$  with
$\mathop{\mbox{esssup}} \limits_{\omega\in \Omega} \left(\int_s^T\|X(t)\|_{\mathcal{H}}^pdt\right)<\infty$.

\item  $\mathcal{L}^{\infty}_{\mathcal{F}}(\Omega,C([s,T],\mathcal{H}))$: the Banach space of all $\mathcal{F}$-progressively measurable  processes $X:[s,T]\times \Omega\rightarrow \mathcal{H}$  with
$\mathop{\mbox{esssup}} \limits_{\omega\in \Omega} \left(\sup \limits_{t\in[s,T]}\|X(t)\|_{\mathcal{H}}\right)<\infty$.

\item $\mathcal{L}^q_{\mathcal{F}}(\Omega,\mathcal{L}^p([s,T],\mathcal{H}))$: the space of all $\mathcal{F}$-progressively measurable processes $X:[s,T]\times \Omega\rightarrow \mathcal{H}$  with $\mathbb{E}\left(\left[\int_s^T\|X(t)\|_{\mathcal{H}}^pdt\right]^{\frac{q}{p}}\right)<\infty$.

\end{itemize}
We denote $\mathcal{L}^p_{\mathcal{F}}(\Omega,\mathcal{L}^p([s,T],\mathcal{H}))=\mathcal{L}^p_{\mathcal{F}}([s,T],\mathcal{H})$. Clearly, $\mathcal{L}^2_{\mathcal{F}_{t}}(\Omega,\mathcal{H})$ (resp. $\mathcal{L}^2_{\mathcal{F}}([s,T],\mathcal{H})$) is a Hilbert space equipped with the norm $\|\cdot\|_{\mathcal{L}^2_{\mathcal{F}_{t}}(\Omega,\mathcal{H})}=\mathbb{E}^{\frac{1}{2}}[\|\cdot\|_{\mathcal{H}}^2]$ (resp. $\|\cdot\|_{\mathcal{L}^2_{\mathcal{F}}([s,T],\mathcal{H})}=\mathbb{E}^{\frac{1}{2}}\left[\int_s^T|\cdot(t)|_{\mathcal{H}}^2dt\right]$) and inner product $\langle\cdot,\cdot\rangle_{\mathcal{L}^2_{\mathcal{F}_{t}(\Omega,\mathcal{H})}}$ (resp. $\langle\cdot,\cdot\rangle_{\mathcal{L}^2_{\mathcal{F}}([s,T],\mathcal{H})}$). Without loss of generality, we denote $\langle\cdot,\cdot\rangle$ (resp. $|\cdot|$) the inner product (resp. the norm) in any Euclidean space.  And for notational convenience, we will frequently suppress the $t$-dependence of a process involved in an equation or an integral.

The Stackelberg game, also known as the leader-follower game, was first introduced by Stackelberg \cite{von2010market} to obtain optimal strategies in competitive economics.  There are usually two players in such a game. One player acts as the leader (she) while the other behaves as the follower (he). After the leader announcing her strategy $u_2$, the follower reacts to it by optimising his cost functional with $u_1^*(u_2)$. Then, the leader would like to seek a strategy $\widehat{u}_2$ to optimise her cost functional based on the follower's best response. The best strategy of the leader together with the best response of the follower $(u_1^*(\widehat{u}_2),\widehat{u}_2)$ is called the Stackelberg equilibrium. The leader-follower structure of the Stackelberg game is suitable for modelling many practical problems arising in various fields such as newsvendor problems \cite{oksendal2013stochastic},  supply chain problems \cite{zou2020stackelberg},  advertising problems \cite{chutani2014feedback,meng2023stochastic}, energy problems \cite{yan2024stackelberg}, principal-agent/optimal contract problems \cite{cvitanic2013contract}, just mention a few.
For practical reasons, one may further consider  constraints on the states and the strategies of the leaders in  Stackelberg games. For example, Sanguanpuak et al. \cite{sanguanpuak2019radio} formulated a Stackelberg game model with  latency constraints in SBSs, and gave its Stackelberg equilibrium. Matamala and Feijoo \cite{matamala2021two} proposed a  stochastic Stackelberg game model for microgrid operation with probabilistic constraints for renewable energy generation uncertainty, and obtained the microgrids operational decision. Qin et al. \cite{qin2024integral} proposed an integral reinforcement learning-based dynamic event-triggered safety control scheme to tackle the multiplayer Stackelberg-Nash games  in continuous-time nonlinear systems with time-varying state constraints.

The linear-quadratic stochastic Stackelberg differential game (LQ-SSDG, for short), as one of the most important games in the dynamic Stackelberg game theory, has been widely investigated. It is well-known that the LQ-SSDG is closely related to two optimal control problems, i.e., the follower's problem can be described as an optimal control problem governed by the SDE, while the leader's turns out to be an optimal control problem governed by the fully coupled forward-backward stochastic differential equation (FBSDE, for short). Owing to the nice mathematical structure of LQ-SSDGs, researchers are able to express the Stackelberg equilibria as functions of the state variable, which are called the feedback Stackelberg equilibria. Till now, researchers have obtained the feedback Stackelberg equilibria for various type LQ-SSDGs (see \cite{gou2023linear, moon2022linear, moon2023linear, wu2024zero, xiang2024stochastic, zheng2020linear} and references therein).  Similarly as in Stackelberg games,  we can consider constraints on the states and the strategies of the leaders in the LQ-SSDGs.  It is well-known that  stochastic linear-quadratic control problems are closely related to  LQ-SSDGs. Various type constraints have been considered in stochastic linear-quadratic control problems,  such as linear equality constraints \cite{zhang2023stochastic}, linear inequality constraints \cite{Wu2020On}, quadratic constraints \cite{lim1999stochastic}, terminal state constraints \cite{bank2018linear}, expectation constraints \cite{chow2020dynamic} and affine constraints \cite{gou2024stochastic}. Also, some type constraints have been considered in LQ-SSDGs. For example, Zhang and Zhang \cite{zhang2021global} investigated SSDGs with pointwise constraint, obtained the Stackelberg equilibrium by establishing the Pontryagin  maximum principle for the leader's optimal strategy and applied the results to LQ-SSDGs.  Feng and Hu \cite{feng2022backward} studied the backward LQ-SSDGs with both pointwise constraints and  affine constraints, and gave the Stackelberg equilibrium by some coupled backward-forward stochastic differential equations with mixed initial-terminal conditions and nonlinear projection operator. For  Stackelberg dynamic games with nonlinear mixed equality and inequality constraints, we refer readers to \cite{li2024computation}. Recently, Gou et al. \cite{gou2024stochastic} showed that many expectation-type constraints including quadratic constraints and risk constraints in stochastic linear-quadratic control problems can be approximately captured by finite many affine constraints. Thus a natural question is: ``Does this hold true in the case of LQ-SSDGs?" The following example gives a positive answer.
\begin{example}\label{+18}
For bounded and $\mathcal{F}$-progressively measurable processes $\widetilde{D},\widetilde{G}:[0,T]\times\Omega\rightarrow\mathcal{S}^n$ and
$\widetilde{E_1},\widetilde{E_2}:[0,T]\times\Omega\rightarrow\mathcal{S}^m$,  consider the quadratic functional $\Theta: U_1\rightarrow [0,+\infty)$ defined by
\begin{align*}
\Theta(u_2)=&\langle X^*,\widetilde{D}X^*\rangle_{\mathcal{L}^2_{\mathcal{F}}([s,T],\mathbb{R}^n)}+\langle u^*_1,\widetilde{E_1}u^*_1\rangle_{\mathcal{L}^2_{\mathcal{F}}([s,T],\mathbb{R}^m)} +\langle u_2,\widetilde{E_2}u_2\rangle_{\mathcal{L}^2_{\mathcal{F}}([s,T],\mathbb{R}^m)}+\langle X^*(T),\widetilde{G}X^*(T)\rangle_{\mathcal{L}^2_{\mathcal{F}_{T}}(\Omega,\mathbb{R}^n)},
\end{align*}
where
$
U_1=\left\{u_2\in\mathcal{L}^2_{\mathcal{F}}([s,T],\mathbb{R}^m)\big|\Theta(u_2)\leq a_0\right\},
$
the triple $(X^*,u^*_1,u_2)$ is determined by \eqref{SDE1}, $u^*_1=u^*_1(u_2)$ is the follower's response which depends on the leader's announced strategy $u_2$, and  $a_0$ is a given positive constant. Then one can easily show that $U_1$ is closed and convex in $\mathcal{L}^2_{\mathcal{F}}([s,T],\mathbb{R}^m)$. On the other hand, let
\begin{align*}
U_2=\Bigg\{&u\in\mathcal{L}^2_{\mathcal{F}}([s,T],\mathbb{R}^m)\big|\mbox{for $(X^*,u^*_1,u_2)$ governed by \eqref{SDE1} and any}\;\\
&(\widetilde{\alpha},\widetilde{\beta}_1,\widetilde{\beta}_2,\widetilde{\gamma})\in\mathcal{L}^2_{\mathcal{F}}([s,T],\mathbb{R}^n)\times\mathcal{L}^2_{\mathcal{F}}([s,T],\mathbb{R}^m)
\times\mathcal{L}^2_{\mathcal{F}}([s,T],\mathbb{R}^m)\times\mathcal{L}^2_{\mathcal{F}_{T}}(\Omega,\mathbb{R}^n)\;\mbox{such that}\\
&\langle\widetilde{D}\widetilde{\alpha},\widetilde{\alpha}\rangle_{\mathcal{L}^2_{\mathcal{F}}([s,T],\mathbb{R}^n)}+\langle \widetilde{E_1}\widetilde{\beta}_1,\widetilde{\beta}_1\rangle_{\mathcal{L}^2_{\mathcal{F}}([s,T],\mathbb{R}^m)}+\langle \widetilde{E_2}\widetilde{\beta}_2,\widetilde{\beta}_2\rangle_{\mathcal{L}^2_{\mathcal{F}}([s,T],\mathbb{R}^m)}+\langle\widetilde{G}\widetilde{\gamma},\widetilde{\gamma}\rangle_{\mathcal{L}^2_{\mathcal{F}_{T}}(\Omega,\mathbb{R}^n)}= a_0,\\
&\mbox{the following inquality holds:}\\
&\langle \widetilde{\alpha},\widetilde{D}X^*\rangle_{\mathcal{L}^2_{\mathcal{F}}([s,T],\mathbb{R}^n)}+\langle \widetilde{\beta}_1,\widetilde{E_1}u^*_1\rangle_{\mathcal{L}^2_{\mathcal{F}}([s,T],\mathbb{R}^m)}+
\langle \widetilde{\beta}_2,\widetilde{E_2}u_2\rangle_{\mathcal{L}^2_{\mathcal{F}}([s,T],\mathbb{R}^m)}+\langle \widetilde{\gamma},\widetilde{G}X^*(T)\rangle_{\mathcal{L}^2_{\mathcal{F}_{T}}(\Omega,\mathbb{R}^n)}\leq a_0\Bigg\}.
\end{align*}
Then for any $(\widetilde{\alpha},\widetilde{\beta}_1,\widetilde{\beta}_2,\widetilde{\gamma})\in\mathcal{L}^2_{\mathcal{F}}([s,T],\mathbb{R}^n)\times\mathcal{L}^2_{\mathcal{F}}([s,T],\mathbb{R}^m)
\times\mathcal{L}^2_{\mathcal{F}}([s,T],\mathbb{R}^m)\times\mathcal{L}^2_{\mathcal{F}_{T}}(\Omega,\mathbb{R}^n)$ satisfying
\begin{equation}\label{+17}
\langle\widetilde{D}\widetilde{\alpha},\widetilde{\alpha}\rangle_{\mathcal{L}^2_{\mathcal{F}}([s,T],\mathbb{R}^n)}+\langle \widetilde{E_1}\widetilde{\beta}_1,\widetilde{\beta}_1\rangle_{\mathcal{L}^2_{\mathcal{F}}([s,T],\mathbb{R}^m)}+\langle \widetilde{E_2}\widetilde{\beta}_2,\widetilde{\beta}_2\rangle_{\mathcal{L}^2_{\mathcal{F}}([s,T],\mathbb{R}^m)}+\langle\widetilde{G}\widetilde{\gamma},\widetilde{\gamma}\rangle_{\mathcal{L}^2_{\mathcal{F}_{T}}(\Omega,\mathbb{R}^n)}= a_0,
\end{equation}
we have $U_1\subseteq U_2$ because for any $u_2\in U_1$,
\begin{align*}
0\leq &\langle X^*-\widetilde{\alpha},\widetilde{D}(X^*-\widetilde{\alpha})\rangle_{\mathcal{L}^2_{\mathcal{F}}([s,T],\mathbb{R}^n)}+\langle u^*_1-\widetilde{\beta}_1,\widetilde{E_1}(u^*_1-\widetilde{\beta}_1)\rangle_{\mathcal{L}^2_{\mathcal{F}}([s,T],\mathbb{R}^m)}\\
 &\mbox{}+\langle u_2-\widetilde{\beta}_2,\widetilde{E_2}(u_2-\widetilde{\beta}_2)\rangle_{\mathcal{L}^2_{\mathcal{F}}([s,T],\mathbb{R}^m)}+\langle X^*(T)-\widetilde{\gamma},\widetilde{G}(X^*(T)-\widetilde{\gamma})\rangle_{\mathcal{L}^2_{\mathcal{F}_{T}}(\Omega,\mathbb{R}^n)}\\
\leq &\Theta(u)+\langle\widetilde{D}\widetilde{\alpha},\widetilde{\alpha}\rangle_{\mathcal{L}^2_{\mathcal{F}}([s,T],\mathbb{R}^n)}+\langle \widetilde{E_1}\widetilde{\beta}_1,\widetilde{\beta}_1\rangle_{\mathcal{L}^2_{\mathcal{F}}([s,T],\mathbb{R}^m)}+\langle \widetilde{E_2}\widetilde{\beta}_2,\widetilde{\beta}_2\rangle_{\mathcal{L}^2_{\mathcal{F}}([s,T],\mathbb{R}^m)}
+\langle\widetilde{G}\widetilde{\gamma},\widetilde{\gamma}\rangle_{\mathcal{L}^2_{\mathcal{F}_{T}}(\Omega,\mathbb{R}^n)}\\
&\mbox{}-2\left[\langle \widetilde{\alpha},\widetilde{D}X^*\rangle_{\mathcal{L}^2_{\mathcal{F}}([s,T],\mathbb{R}^n)}+\langle \widetilde{\beta}_1,\widetilde{E_1}u^*_1\rangle_{\mathcal{L}^2_{\mathcal{F}}([s,T],\mathbb{R}^m)}+
\langle \widetilde{\beta}_2,\widetilde{E_2}u_2\rangle_{\mathcal{L}^2_{\mathcal{F}}([s,T],\mathbb{R}^m)}+\langle \widetilde{\gamma},\widetilde{G}X^*(T)\rangle_{\mathcal{L}^2_{\mathcal{F}_{T}}(\Omega,\mathbb{R}^n)}\right]\\
\leq &2a_0-2\left[\langle \widetilde{\alpha},\widetilde{D}X^*\rangle_{\mathcal{L}^2_{\mathcal{F}}([s,T],\mathbb{R}^n)}+\langle \widetilde{\beta}_1,\widetilde{E_1}u^*_1\rangle_{\mathcal{L}^2_{\mathcal{F}}([s,T],\mathbb{R}^m)}+
\langle \widetilde{\beta}_2,\widetilde{E_2}u_2\rangle_{\mathcal{L}^2_{\mathcal{F}}([s,T],\mathbb{R}^m)}+\langle \widetilde{\gamma},\widetilde{G}X^*(T)\rangle_{\mathcal{L}^2_{\mathcal{F}_{T}}(\Omega,\mathbb{R}^n)}\right].
\end{align*}
On the other hand, for any given $u_2\in U_2$, the state $X^*\in\mathcal{L}^2_{\mathcal{F}}([s,T],\mathbb{R}^n)$ is uniquely determined. Letting $(\widetilde{\alpha},\widetilde{\beta}_1,\widetilde{\beta}_2,\widetilde{\gamma})
=\left(\sqrt{\frac{a_0}{\Theta(\widetilde{u})}}X^*,\sqrt{\frac{a_0}{\Theta(\widetilde{u})}}u_1^*,\sqrt{\frac{a_0}{\Theta(\widetilde{u})}}u_2,
\sqrt{\frac{a_0}{\Theta(\widetilde{u})}}X^*(T)\right)$ which satisfies \eqref{+17}, one has
$$
\langle \widetilde{\alpha},\widetilde{D}X^*\rangle_{\mathcal{L}^2_{\mathcal{F}}([s,T],\mathbb{R}^n)}+\langle \widetilde{\beta}_1,\widetilde{E_1}u^*_1\rangle_{\mathcal{L}^2_{\mathcal{F}}([s,T],\mathbb{R}^m)}+
\langle \widetilde{\beta}_2,\widetilde{E_2}u_2\rangle_{\mathcal{L}^2_{\mathcal{F}}([s,T],\mathbb{R}^m)}+\langle \widetilde{\gamma},\widetilde{G}X^*(T)\rangle_{\mathcal{L}^2_{\mathcal{F}_{T}}(\Omega,\mathbb{R}^n)}
=\sqrt{a_0\Theta(u_2})\leq a_0,
$$
which means that $\Theta(u_2)\leq a_0$ and so $u_2\in U_1$.  This implies that $U_2\subseteq U_1$, which leads to $U_1=U_2$. Let
\begin{align*}
U^{p}=\Bigg\{&u_2\in\mathcal{L}^2_{\mathcal{F}}([s,T],\mathbb{R}^m)\big|\mbox{for $(X^*,u^*_1,u_2)$ governed \eqref{SDE1} and any}\;\\
&(\widetilde{\alpha}_i,\widetilde{\beta}_{1,i},\widetilde{\beta}_{2,i},\widetilde{\gamma}_i)\in\mathcal{L}^2_{\mathcal{F}}([s,T],\mathbb{R}^n)\times\mathcal{L}^2_{\mathcal{F}}([s,T],\mathbb{R}^m)
\times\mathcal{L}^2_{\mathcal{F}}([s,T],\mathbb{R}^m)\times\mathcal{L}^2_{\mathcal{F}_{T}}(\Omega,\mathbb{R}^n)\;\mbox{such that}\\
&\mathbb{P}\left\{(\widetilde{\alpha}_i,\widetilde{\beta}_{1,i},\widetilde{\beta}_{2,i},\widetilde{\gamma}_i)=(\widetilde{\alpha}_j,\widetilde{\beta}_{1,j},\widetilde{\beta}_{2,j},\widetilde{\gamma}_j),\;i\neq j,\;i,j=1,2,\cdots,p\right\}=0\;\mbox{and}\\
&\langle\widetilde{D}\widetilde{\alpha},\widetilde{\alpha}\rangle_{\mathcal{L}^2_{\mathcal{F}}([s,T],\mathbb{R}^n)}+\langle \widetilde{E_1}\widetilde{\beta}_1,\widetilde{\beta}_1\rangle_{\mathcal{L}^2_{\mathcal{F}}([s,T],\mathbb{R}^m)}+\langle \widetilde{E_2}\widetilde{\beta}_2,\widetilde{\beta}_2\rangle_{\mathcal{L}^2_{\mathcal{F}}([s,T],\mathbb{R}^m)}+\langle\widetilde{G}\widetilde{\gamma},\widetilde{\gamma}\rangle_{\mathcal{L}^2_{\mathcal{F}_{T}}(\Omega,\mathbb{R}^n)}= a_0,\\
&\mbox{the following inquality holds:}\\
&\!\!\!\!\!\!\!\!\!\!\!\!\langle \widetilde{\alpha}_i,\widetilde{D}X^*\rangle_{\mathcal{L}^2_{\mathcal{F}}([s,T],\mathbb{R}^n)}+\langle \widetilde{\beta}_{1,i},\widetilde{E_1}u^*_1\rangle_{\mathcal{L}^2_{\mathcal{F}}([s,T],\mathbb{R}^m)}+
\langle \widetilde{\beta}_{2,i},\widetilde{E_2}u_2\rangle_{\mathcal{L}^2_{\mathcal{F}}([s,T],\mathbb{R}^m)}+\langle \widetilde{\gamma}_i,\widetilde{G}X^*(T)\rangle_{\mathcal{L}^2_{\mathcal{F}_{T}}(\Omega,\mathbb{R}^n)}\leq a_0\Bigg\}.
\end{align*}
Then $U_1$ can be approximately described by $U^{p}$ when $p\in \mathbb{N}^*=\{1,2,\cdots\}$ is large enough.
\end{example}
Examples \ref{+18} indicates that the set $U_1$  can be approximately described by $U^{p}$ with enough affine constraints. Thus it would be significant and interesting to study the feedback Stackelberg equilibrium for the LQ-SSDG with affine constraints (LQ-SSDG-AC, for short). However, to the best of our knowledge, there are no papers dealing with such a problem.

The present paper is thus devoted to studying  the LQ-SSDG-AC and finding its  feedback Stackelberg equilibrium. One of the main difficulties in the LQ-SSDG-AC is that the Slater condition is hard to be verified since affine constraints  depend on the triple $(X^*,u^*_1,u_2)$, which can be overcome by introducing the corresponding dual equation. Compared with the above literature reviewed, the main contributions of this paper are threefold:
 \begin{itemize}
 \item From the viewpoint of model, both equality constraints and inequality constraints are considered in the affine constraints which depend on both the follower's response and  the leader's announced strategy, and the coefficients of the cost functionals are allowed to be indefinite.
  \item From the viewpoint of methodology, by constructing new FBSDEs, we rewrite the affine constraints as some constraints depending only on $u_2$, and so the Slater condition can be more easily verified. To the best of our knowledge, this method is new and can be applied to solve some other problems.
 \item From the viewpoint of conclusions, a new positive definite condition is proposed for ensuring the uniqueness of the solutions to the dual problem.
\end{itemize}

The rest of this paper is structured as follows. The next section formulates the LQ-SSDG-AC. In Section 3, the feedback optimal strategy of the follower is obtained under some mild conditions. In section 4, the KKT condition is obtained for the feedback optimal strategy of the leader under some mild conditions, and the feedback Stackelberg equilibrium is given for the LQ-SSDG-AC.  Before concluding this paper,  two non-degenerate  examples with indefinite coefficients are given to show the effectiveness of our main results in Section 5.
\section{Preliminaries}

In this section, we present the formulation of the LQ-SSDG-AC. Based on system \eqref{SDE1}, the admissible control sets  for players can be defined as follows.
\begin{defn}
For positive integers $l,l'$ with $1\leq l'\leq l$, $\mathcal{I}_1=\{1,2,\cdots,l'\}$ and $\mathcal{I}_2=\{l'+1,l'+2,\cdots,l\}$, let the process $\alpha\in\mathcal{L}^2_{\mathcal{F}}([s,T],\mathbb{R}^{n\times l})$, processes $\beta,\gamma\in\mathcal{L}^2_{\mathcal{F}}([s,T],\mathbb{R}^{m\times l})$, and the random variable $\delta\in {\mathcal{L}^{2}_{\mathcal{F}_{T}}(\Omega,\mathbb{R}^{n\times l})}$ be given, then the follower's admissible control set  is defined by
$$U_1^{ad}=\mathcal{L}^2_{\mathcal{F}}([s,T],\mathbb{R}^m)$$
 and the leader's  is defined by
\begin{align}
U_2^{ad}=&\Bigg\{u_2\in\mathcal{L}^2_{\mathcal{F}}([s,T],\mathbb{R}^m)\big|u_2\;\mbox{satisfies the following affine constraints:}\nonumber\\
&\;\langle X^*,\alpha_i\rangle_{\mathcal{L}^2_{\mathcal{F}}([s,T],\mathbb{R}^n)}+\langle u_1^*,\beta_i\rangle_{\mathcal{L}^2_{\mathcal{F}}([s,T],\mathbb{R}^m)}+\langle u_2,\gamma_i\rangle_{\mathcal{L}^2_{\mathcal{F}}([s,T],\mathbb{R}^m)}+\langle X^*(T),\delta_i\rangle_{\mathcal{L}^2_{\mathcal{F}_{T}}(\Omega,\mathbb{R}^n)}\leq a_i\;(i\in\mathcal{I}_1),\nonumber\\
&\;\langle X^*,\alpha_i\rangle_{\mathcal{L}^2_{\mathcal{F}}([s,T],\mathbb{R}^n)}+\langle u_1^*,\beta_i\rangle_{\mathcal{L}^2_{\mathcal{F}}([s,T],\mathbb{R}^m)}+\langle u_2,\gamma_i\rangle_{\mathcal{L}^2_{\mathcal{F}}([s,T],\mathbb{R}^m)}+\langle X^*(T),\delta_i\rangle_{\mathcal{L}^2_{\mathcal{F}_{T}}(\Omega,\mathbb{R}^n)}= a_i \;(i\in\mathcal{I}_2)\Bigg\},\label{3}
\end{align}
where $\theta=(\theta_1,\theta_2,\cdots,\theta_l)$ $(\theta\in\{\alpha,\beta,\gamma,\delta\})$, $a=(a_1,a_2,\cdots,a_l)^{\top}\in \mathbb{R}^l$, and $u_1^*$ is the best response of the follower.
\end{defn}

For $(X,u_1,u_2)$ governed by the state equation \eqref{SDE1}, consider the following cost functionals
\begin{equation}\label{8}
\begin{cases}
J_1(X,u_1,u_2)=\mathbb{E}\left[{\int_s^T}\left\langle D_1X,X\right\rangle+\left\langle E_1u_1,u_1\right\rangle
dt+\left\langle G_1X(T),X(T)\right\rangle\right],\\
J_2(X,u_1,u_2)=\mathbb{E}\left[{\int_s^T}\left\langle D_2X,X\right\rangle+\left\langle E_2u_2,u_2\right\rangle
dt+\left\langle G_2X(T),X(T)\right\rangle\right]
\end{cases}
\end{equation}
for the follower and the leader, respectively, where
$D_i:[0,T]\times\Omega\rightarrow\mathcal{S}^n$, $E_i:[0,T]\times\Omega\rightarrow\mathcal{S}^m$ and $G_i:[0,T]\times\Omega\rightarrow\mathcal{S}^n$ are all bounded and $\mathcal{F}$-progressively measurable processes. Moreover, $E_i$ is invertible. We emphasize that $D_i$, $E_i$ and $G_i$ are not necessarily positive definite.

In the present paper, we would like to study the feedback Stackelberg equilibrium for the following non-zero-sum  LQ-SSDG-AC.
\begin{prob}\label{2}
For the strategy $u_2$ announced by the leader, the follower aims to find the optimal strategy $\widehat{u}_1\in U_1^{ad}$ such that
$$
J_1(X^*,u_1^*,u_2)=\inf\limits_{u_1 \in U_1^{ad}}J_1(X,u_1,u_2),
$$
where $(X^*,u_1^*)=(X^*(u_2),u_1^*(u_2))$,  and $J_1(X,u_1,u_2)$  is given in \eqref{8}. Then based on the strategy $u_1^*$ of the follower, the leader aims to find the optimal strategy $\widehat{u}_2\in  U_2^{ad}$ such that
\begin{equation}\label{+8}
J_2(\widehat{X},\widehat{u}_1,\widehat{u}_2)=\inf\limits_{u_2 \in U_2^{ad}}J_2(X^*,u_1^*,u_2),
\end{equation}
where $(\widehat{X},\widehat{u}_1)=(X^*(\widehat{u}_2),u_1^*(\widehat{u}_2))$, and $J_2(X,u_1,u_2)$  is given in \eqref{8}.
\end{prob}
\begin{remark}
$(\widehat{u}_1,\widehat{u}_2)\in U_1^{ad}\times U_2^{ad}$ is called the Stackelberg equilibrium of Problem \ref{2}. In addition, if both $\widehat{u}_1$ and $\widehat{u}_2$ can be represented as functions of the state variable, then $(\widehat{u}_1,\widehat{u}_2)$ with such an expression is called the feedback Stackelberg equilibrium of Problem \ref{2}.
\end{remark}

\section{LQ-SSDG for the Follower}
In this section, we  search for the feedback expression of the follower's optimal strategy. To this end, we need the following condition.

\noindent $(\mathbf{H1})$ There exists a constant $\epsilon_1>0$ such that for any $\widetilde{u}_1\in\mathcal{L}^2_{\mathcal{F}}([s,T],\mathbb{R}^m)$,
$$
\left\langle D_1\widetilde{X},\widetilde{X}\right\rangle_{\mathcal{L}^2_{\mathcal{F}}([s,T],\mathbb{R}^n)}+\left\langle E_1\widetilde{u}_1, \widetilde{u}_1\right\rangle_{\mathcal{L}^2_{\mathcal{F}}([s,T],\mathbb{R}^m)}+\left\langle G_1\widetilde{X}(T),\widetilde{X}(T)\right\rangle_{\mathcal{L}^2_{\mathcal{F}_{T}}(\Omega,\mathbb{R}^n)}\\
\geq\epsilon_1\| \widetilde{u}_1\|^2_{\mathcal{L}^2_{\mathcal{F}}([s,T],\mathbb{R}^m)},
$$
where $\widetilde{X}\in \mathcal{L}^2_{\mathcal{F}}(\Omega,C([s,T],\mathbb{R}^n))$ is the unique solution to the SDE
\begin{equation}\label{7}
\begin{cases}
d\widetilde{X}=\left[A\widetilde{X}+B_1\widetilde{u}_1\right]dt+C\widetilde{X}dB(t),\\
\widetilde{X}(0)=0^{n\times1}.
\end{cases}
\end{equation}
Then, the follow's optimal strategy can be given as follows.
\begin{thm}
Under $(\mathbf{H1})$, $u_1^*$ is an optimal strategy of the follower if $u_1^*=E_1^{-1}B_1^{\top}p_1^*$, where $(X^*,p_1^*,q_1^*)\in\mathcal{L}^2_{\mathcal{F}}(\Omega,C([s,T],\mathbb{R}^n))\times\mathcal{L}^2_{\mathcal{F}}(\Omega,C([s,T],\mathbb{R}^n))\times\mathcal{L}^2_{\mathcal{F}}([s,T],\mathbb{R}^n)$ is the unique solution to the FBSDE
\begin{equation}\label{+9}
\begin{cases}
dX^*=\left[AX^*+B_1u_1^*+B_2u_2\right]dt+CX^*dB(t), \quad t\in[s,T],\\
dp_1^*=\left[D_1X^*-A^{\top}p_1^*-C^{\top}q_1^*\right]dt+q_1^*dB(t), \quad t\in[s,T],\\
X^*(s)=\xi\in \mathbb{R}^n,\quad p_1^*(T)=-G_1X^*(T).
\end{cases}
\end{equation}
\end{thm}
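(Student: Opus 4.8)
The plan is to treat the follower's problem, for a fixed leader strategy $u_2$, as a convex minimization over the Hilbert space $\mathcal{L}^2_{\mathcal{F}}([s,T],\mathbb{R}^m)$, to characterize its unique minimizer by a first-order (variational) condition, and then to convert that condition into the feedback form $u_1^*=E_1^{-1}B_1^{\top}p_1^*$ through an adjoint (backward) equation. First I would exploit the linearity of \eqref{SDE1} in $(u_1,X)$: writing any competitor as $u_1=u_1^*+\widetilde{u}_1$, the corresponding state splits as $X=X^*+\widetilde{X}$, where $X^*$ is the state driven by $u_1^*$ and $\widetilde{X}$ solves the homogeneous system \eqref{7} with control $\widetilde{u}_1$ and zero initial datum. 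Substituting this decomposition into $J_1$ from \eqref{8} and expanding the three symmetric quadratic forms yields $J_1(X,u_1,u_2)=J_1(X^*,u_1^*,u_2)+2\mathcal{D}(\widetilde{u}_1)+\mathcal{Q}(\widetilde{u}_1)$, where $\mathcal{D}$ is linear in $\widetilde{u}_1$ and $\mathcal{Q}(\widetilde{u}_1)$ is precisely the quadratic form appearing on the left-hand side of $(\mathbf{H1})$. Hence $(\mathbf{H1})$ gives $\mathcal{Q}(\widetilde{u}_1)\geq\epsilon_1\|\widetilde{u}_1\|^2_{\mathcal{L}^2_{\mathcal{F}}([s,T],\mathbb{R}^m)}$, so $u_1\mapsto J_1$ is uniformly convex and coercive; consequently any critical point is automatically the unique global minimizer, and it suffices to force the linear part $\mathcal{D}$ to vanish.

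The second step is to rewrite $\mathcal{D}(\widetilde{u}_1)=\mathbb{E}\big[\int_s^T\langle D_1X^*,\widetilde{X}\rangle+\langle E_1u_1^*,\widetilde{u}_1\rangle\,dt+\langle G_1X^*(T),\widetilde{X}(T)\rangle\big]$ as a functional that is manifestly linear in $\widetilde{u}_1$ alone, eliminating the implicit dependence through $\widetilde{X}$. To this end I would introduce the adjoint pair $(p_1^*,q_1^*)$ solving the backward equation in \eqref{+9} and apply It\^o's formula to $\langle p_1^*,\widetilde{X}\rangle$. The drift terms produced by $A$ and the It\^o cross-variation term produced by $C$ cancel pairwise against those coming from the backward dynamics, leaving, after taking expectations and using $\widetilde{X}(s)=0$, the identity $\mathbb{E}[\langle p_1^*(T),\widetilde{X}(T)\rangle]=\mathbb{E}\big[\int_s^T\langle D_1X^*,\widetilde{X}\rangle+\langle B_1^{\top}p_1^*,\widetilde{u}_1\rangle\,dt\big]$. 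Substituting the terminal condition $p_1^*(T)=-G_1X^*(T)$ then converts the terminal term of $\mathcal{D}$ into an integral against $\widetilde{u}_1$ while the $\langle D_1X^*,\widetilde{X}\rangle$-terms cancel, so that $\mathcal{D}(\widetilde{u}_1)=\mathbb{E}\big[\int_s^T\langle E_1u_1^*-B_1^{\top}p_1^*,\widetilde{u}_1\rangle\,dt\big]$. Since $\widetilde{u}_1$ is arbitrary, $\mathcal{D}\equiv0$ forces $E_1u_1^*=B_1^{\top}p_1^*$, and invertibility of $E_1$ delivers exactly $u_1^*=E_1^{-1}B_1^{\top}p_1^*$.

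What remains, and where I expect the real work to lie, is the well-posedness claim that \eqref{+9} admits a unique solution in the stated spaces. Because $D_1$ and $G_1$ are allowed to be indefinite, the fully coupled forward-backward system \eqref{+9} need not satisfy the usual monotonicity conditions under which coupled FBSDEs are solvable over an arbitrary horizon, so I would not attack \eqref{+9} directly. Instead I would obtain solvability from the optimization side: the direct method in $\mathcal{L}^2_{\mathcal{F}}([s,T],\mathbb{R}^m)$, combined with the uniform convexity and coercivity established above, yields a unique minimizer $u_1^*$ of the follower's cost functional. Reading $X^*$ off the forward SDE driven by this $u_1^*$ and then solving the uniquely solvable linear backward equation for $(p_1^*,q_1^*)$ produces a solution of \eqref{+9}, while the feedback relation $u_1^*=E_1^{-1}B_1^{\top}p_1^*$ closes the loop; uniqueness for \eqref{+9} follows from uniqueness of the minimizer together with unique solvability of the linear forward SDE and the linear BSDE. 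The main subtlety to verify carefully is that the It\^o cross-terms telescope exactly as claimed and that the stochastic integrals are genuine martingales with zero expectation, which is ensured in the indefinite setting by the a priori $\mathcal{L}^2$-estimates coming from boundedness of the coefficients.
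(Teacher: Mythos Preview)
Your proposal is correct and follows essentially the same route as the paper: expand $J_1$ along the affine decomposition $u_1=u_1^*+\widetilde{u}_1$, apply It\^o's formula to $\langle p_1^*,\widetilde{X}\rangle$ to reduce the cross (linear) term to $\mathbb{E}\int_s^T\langle E_1u_1^*-B_1^{\top}p_1^*,\widetilde{u}_1\rangle\,dt$, and invoke $(\mathbf{H1})$ on the purely quadratic remainder. The paper's proof is the same computation with the sign convention $(\widetilde{X},\widetilde{u}_1)=(X^*-X,u_1^*-u_1)$, so that one bounds $J_1(X^*,u_1^*,u_2)-J_1(X,u_1,u_2)$ from above by $-\epsilon_1\|\widetilde{u}_1\|^2$ directly.

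The one point of divergence is well-posedness of \eqref{+9}. You propose to obtain it from the optimization side via the direct method (coercivity and uniform convexity give a unique minimizer, then read off $X^*$ and solve the linear BSDE for $(p_1^*,q_1^*)$). The paper does not address solvability in the proof of this theorem at all; it is supplied in the subsequent result (Theorem~\ref{42}) through the stochastic Riccati equation \eqref{9} and the linear BSDE \eqref{4}, invoking \cite{sun2021indefinite} for the indefinite Riccati solvability. Your variational route is a legitimate alternative that avoids Riccati theory here, at the cost of a slightly less explicit construction; the paper's Riccati route immediately yields the feedback form \eqref{10} needed downstream.
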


\begin{proof}
For given $u_2\in U_2^{ad}$ and any pair $(X,u_1)$ governed by \eqref{SDE1}, let $(\widetilde{X},\widetilde{u}_1)=(X^*-X,u^*_1-u_1)$, which satisfies \eqref{7}. Then, we have
\begin{align*}
\triangle J_1=&\frac{1}{2}\left[J_1(X^*,u_1^*,u_2)-J_1(X,u_1,u_2)\right]\\
=&-\frac{1}{2}\left\langle D_1\widetilde{X},\widetilde{X}\right\rangle_{\mathcal{L}^2_{\mathcal{F}}([s,T],\mathbb{R}^n)}-\frac{1}{2}\left\langle E_1\widetilde{u}_1,\widetilde{u}_1\right\rangle_{\mathcal{L}^2_{\mathcal{F}}([s,T],\mathbb{R}^m)}
-\frac{1}{2}\left\langle G_1\widetilde{X}(T),\widetilde{X}(T)\right\rangle_{\mathcal{L}^2_{\mathcal{F}_{T}}(\Omega,\mathbb{R}^n)}\\
&\mbox{}+\left\langle D_1\widetilde{X},X^*\right\rangle_{\mathcal{L}^2_{\mathcal{F}}([s,T],\mathbb{R}^n)}+\left\langle E_1\widetilde{u}_1,u^*_1\right\rangle_{\mathcal{L}^2_{\mathcal{F}}([s,T],\mathbb{R}^m)}
+\left\langle G_1\widetilde{X}(T),X^*(T)\right\rangle_{\mathcal{L}^2_{\mathcal{F}_{T}}(\Omega,\mathbb{R}^n)}.
\end{align*}
Next, applying the It\^{o} formula to $\langle\widetilde{X},p^*_1\rangle$ derives
$$-\left\langle \widetilde{X}(T),G_1 X^*(T)\right\rangle_{\mathcal{L}^2_{\mathcal{F}_{T}}(\Omega,\mathbb{R}^n)}=\left\langle \widetilde{X},D_1 X^*\right\rangle_{\mathcal{L}^2_{\mathcal{F}}([s,T],\mathbb{R}^n)}+\left\langle B_1\widetilde{u}_1,p^*_1\right\rangle_{\mathcal{L}^2_{\mathcal{F}}([s,T],\mathbb{R}^n)}.$$
Therefore, combining the above arguments, one has
$$
\triangle J_1\leq -\frac{\epsilon_1}{2}\| \widetilde{u}_1\|^2_{\mathcal{L}^2_{\mathcal{F}}([s,T],\mathbb{R}^m)}+\left\langle \widetilde{u}_1, E_1u^*_1-B_1^{\top}p^*_1\right\rangle_{\mathcal{L}^2_{\mathcal{F}}([s,T],\mathbb{R}^m)}
=-\frac{\epsilon_1}{2}\| \widetilde{u}_1\|^2_{\mathcal{L}^2_{\mathcal{F}}([s,T],\mathbb{R}^m)},
$$
which indicates the conclusion.
\end{proof}
We further search for the feedback expression for $u_1^*$. Setting $p_1^*=-\phi_1X^*-\psi_1$, then one can check that $\phi_1$ is determined by the stochastic Riccati equations (SREs, for short)
\begin{equation}\label{9}
\begin{cases}
d\phi_1=-\left[D_1+\phi_1A+A^{\top}\phi_1+\widetilde{\phi}_1C+C^{\top}\widetilde{\phi}_1+C^{\top}\phi_1C-\phi_1S_1\phi_1\right]dt\mbox{}+\widetilde{\phi}_1dB(t), \quad t\in[s,T],\\
\phi_1(T)=G_1,
\end{cases}
\end{equation}
and $\psi_1$ is determined by the BSDE
\begin{equation}\label{4}
\begin{cases}
d\psi_1=-\left[(A^{\top}-\phi_1S_1)\psi_1+C^{\top}\widetilde{\psi}_1+\phi_1B_2u_2\right]dt+\widetilde{\psi}_1dB(t), \quad t\in[s,T],\\
\psi_1(T)=0^{n\times1},
\end{cases}
\end{equation}
where $S_1=B_1E_1^{-1}B^{\top}_1$. We have the following results.
\begin{thm}\label{42}
Under $(\mathbf{H1})$, an optimal feedback strategy of the follower can be given as:
\begin{equation}\label{10}
u_1^*=-E_1^{-1}B_1^{\top}(\phi_1X^*+\psi_1).
\end{equation}
\end{thm}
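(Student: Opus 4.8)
The plan is to take the characterization of the optimal control already obtained in the previous theorem, namely $u_1^*=E_1^{-1}B_1^{\top}p_1^*$ with $(X^*,p_1^*,q_1^*)$ solving the FBSDE \eqref{+9}, and to decouple its forward and backward components through an affine feedback ansatz. Concretely, I would posit $p_1^*=-\phi_1 X^*-\psi_1$ for a symmetric matrix-valued process $\phi_1$ and a vector-valued process $\psi_1$ to be determined. Substituting this ansatz directly into $u_1^*=E_1^{-1}B_1^{\top}p_1^*$ immediately yields the claimed feedback form $u_1^*=-E_1^{-1}B_1^{\top}(\phi_1 X^*+\psi_1)$, so the entire content of the theorem reduces to exhibiting $\phi_1,\psi_1$ that make the ansatz consistent with \eqref{+9}.

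To carry out the verification I would insert the feedback law into the forward equation, using $B_1u_1^*=-S_1(\phi_1X^*+\psi_1)$ with $S_1=B_1E_1^{-1}B_1^{\top}$, so that the closed-loop state obeys $dX^*=[(A-S_1\phi_1)X^*-S_1\psi_1+B_2u_2]dt+CX^*dB(t)$. Then I would apply the It\^o formula to $-\phi_1X^*-\psi_1$, substituting the dynamics \eqref{9} for $d\phi_1$ and \eqref{4} for $d\psi_1$, and keeping track of the cross-variation term $\widetilde{\phi}_1CX^*\,dt$ arising from the common Brownian integrator. Matching the diffusion part against $q_1^*dB(t)$ identifies $q_1^*=-\widetilde{\phi}_1X^*-\phi_1CX^*-\widetilde{\psi}_1$; matching the drift against $D_1X^*-A^{\top}p_1^*-C^{\top}q_1^*$ and demanding that the $X^*$-linear part and the remaining ($\psi_1$- and $u_2$-dependent) part vanish separately is exactly what forces $\phi_1$ to satisfy the SRE \eqref{9} and $\psi_1$ to satisfy the BSDE \eqref{4}; the quadratic $-\phi_1S_1\phi_1$ term and the $\phi_1A+A^{\top}\phi_1$, $\widetilde{\phi}_1C+C^{\top}\widetilde{\phi}_1$, $C^{\top}\phi_1C$ terms appear precisely so that the spurious $X^*$-contributions cancel. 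The terminal data $\phi_1(T)=G_1$ and $\psi_1(T)=0^{n\times1}$ reproduce $p_1^*(T)=-G_1X^*(T)$, closing the consistency check.

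The It\^o bookkeeping above is routine; the genuine difficulty is the well-posedness of the decoupling, that is, the solvability of the SRE \eqref{9}. Since $D_1$ and $G_1$ are permitted to be indefinite, \eqref{9} is an indefinite nonlinear matrix-valued BSDE whose quadratic term $-\phi_1S_1\phi_1$ rules out any naive contraction argument, and this is precisely where $(\mathbf{H1})$ must enter. I would read $(\mathbf{H1})$ as a uniform convexity condition on the follower's cost functional and invoke the standard equivalence between uniform convexity of the quadratic functional and global solvability of the associated Riccati equation to obtain a unique adapted pair $(\phi_1,\widetilde{\phi}_1)$ in the relevant space. Once $\phi_1$ is secured, the linear BSDE \eqref{4} for $\psi_1$ is uniquely solvable by standard linear BSDE theory, and the verification then confirms that the resulting $u_1^*$ is admissible and coincides with the optimal control furnished by the previous theorem, which is the assertion of the statement.
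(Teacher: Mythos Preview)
Your proposal is correct but follows a different route from the paper's own proof. You argue by \emph{decoupling}: take the FBSDE characterization $u_1^*=E_1^{-1}B_1^{\top}p_1^*$ from the preceding theorem, posit the affine ansatz $p_1^*=-\phi_1X^*-\psi_1$, and verify via It\^o's formula applied to $-\phi_1X^*-\psi_1$ that consistency with \eqref{+9} forces exactly the SRE \eqref{9} and the linear BSDE \eqref{4}. The paper instead argues by \emph{completion of squares}: after citing the same external results for the solvability of \eqref{9} and \eqref{4} under $(\mathbf{H1})$, it applies It\^o's formula to the scalar process $\langle\phi_1X+2\psi_1,X\rangle$ for an \emph{arbitrary} admissible pair $(X,u_1)$, thereby rewriting the cost as
\[
J_1(X,u_1,u_2)=\mbox{(terms independent of $u_1$)}+\left\langle E_1^{-1}\big(E_1u_1+B_1^{\top}\phi_1X+B_1^{\top}\psi_1\big),\,E_1u_1+B_1^{\top}\phi_1X+B_1^{\top}\psi_1\right\rangle_{\mathcal{L}^2_{\mathcal{F}}},
\]
from which optimality at $E_1u_1+B_1^{\top}(\phi_1X+\psi_1)=0$ is read off directly. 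Your route is modular and leans on the previous theorem, so the optimality conclusion is inherited cleanly from the variational inequality already established under $(\mathbf{H1})$; the paper's route is self-contained and, as a by-product, produces an explicit expression for the follower's value function, which is convenient for the later analysis of the leader's problem. Both invoke the same solvability result for the indefinite SRE, so neither is more elementary on that front.
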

\begin{proof}
According to \cite[Theorem 6.3]{sun2021indefinite}, there exists a unique solution $(\phi_1,\widetilde{\phi}_1)\in\mathcal{L}^{\infty}_{\mathcal{F}}(\Omega,C([s,T],\mathcal{S}^n))\times\mathcal{L}^2_{\mathcal{F}}([s,T],\mathcal{S}^n)$ to \eqref{9}. Thus by \cite[Proposition 3.2]{yong2020stochastic}, for any $u_2\in U_2^{ad}$, there exists a unique solution $(\psi_1,\widetilde{\psi}_1)\in\mathcal{L}^2_{\mathcal{F}}(\Omega,C([s,T],\mathbb{R}^n))\times\mathcal{L}^2_{\mathcal{F}}([s,T],\mathbb{R}^n)$ to \eqref{4} and a unique solution
$X^*\in\mathcal{L}^2_{\mathcal{F}}(\Omega,C([s,T],\mathbb{R}^n))$ to
\begin{equation*}
\begin{cases}
dX^*=\left[(A-E_1^{-1}B_1^{\top}\phi_1)X^*-E_1^{-1}B_1^{\top}\psi_1+B_2u_2\right]dt+CX^*dB(t), \quad t\in[s,T],\\
X^*(s)=\xi\in \mathbb{R}^n.
\end{cases}
\end{equation*}
So the solvability of \eqref{+9} is guaranteed. Applying the It\^{o} formula to $\langle\phi_1X+2\psi_1,X\rangle$, one has
\begin{align*}
&\mathbb{E}\left[\langle G_1X(T),X(T)\rangle-\langle\phi_1(s)\xi+2\psi_1(s),\xi\rangle\right],\\
=&-\left\langle D_1{X},{X}\right\rangle_{\mathcal{L}^2_{\mathcal{F}}([s,T],\mathbb{R}^n)}+\left\langle S_1\phi_1{X},\phi_1{X}\right\rangle_{\mathcal{L}^2_{\mathcal{F}}([s,T],\mathbb{R}^n)}
+2\left\langle \phi_1B_1u_1,X\right\rangle_{\mathcal{L}^2_{\mathcal{F}}([s,T],\mathbb{R}^n)}\\
&\mbox{}+2\left\langle B_1u_1+S_1\phi_1{X},\psi_1\right\rangle_{\mathcal{L}^2_{\mathcal{F}}([s,T],\mathbb{R}^n)}
+2\left\langle B_2u_2,\psi_1\right\rangle_{\mathcal{L}^2_{\mathcal{F}}([s,T],\mathbb{R}^n)}.
\end{align*}
Then combining the definition of $J_1(X,u_1,u_2)$, we have
\begin{align*}
J_1(X,u_1,u_2)
=&\left\langle\mathbb{E}(\phi_1(s)\xi+2\psi_1(s)),\xi\right\rangle+2\left\langle B_2u_2,\psi_1\right\rangle_{\mathcal{L}^2_{\mathcal{F}}([s,T],\mathbb{R}^n)}-
\left\langle E_1^{-1}B_1^{\top}\psi_1,B_1^{\top}\psi_1\right\rangle_{\mathcal{L}^2_{\mathcal{F}}([s,T],\mathbb{R}^m)}\\
&\mbox{}+\left\langle E_1^{-1}(E_1u_1+B_1^{\top}\phi_1X+B_1^{\top}\psi_1),E_1u_1+B_1^{\top}\phi_1X+B_1^{\top}\psi_1\right\rangle_{\mathcal{L}^2_{\mathcal{F}}([s,T],\mathbb{R}^n)},
\end{align*}
which reaches its minimum when $E_1u_1+B_1^{\top}\phi_1X+B_1^{\top}\psi_1=0$. This ends the proof.
\end{proof}

\section{LQ-SSDG for the Leader}
In this section, we  search for  the feedback expression of the leader's optimal strategy and give the formulation of the feedback Stackelberg equilibrium. Recall \eqref{+9} and \eqref{4},  $(X^*,\psi_1,\widetilde{\psi}_1)$
satisfies the  FBSDE
\begin{equation}\label{+3}
\begin{cases}
dX^*=\left[(A-S_1\phi_1)X^*-S_1\psi_1+B_2u_2\right]dt+CX^*dB(t), \quad t\in[s,T],\\
d\psi_1=-\left[(A^{\top}-\phi_1S_1)\psi_1+C^{\top}\widetilde{\psi}_1+\phi_1B_2u_2\right]dt+\widetilde{\psi}_1dB(t), \quad t\in[s,T],\\
X^*(s)=\xi\in \mathbb{R}^n,\quad \psi_1(T)=0^{n\times1}.
\end{cases}
\end{equation}
We need the following condition for solving the leader's problem.

\noindent $(\mathbf{H2})$ There exists a constant $\epsilon_2>0$ such that for any $\widetilde{u}_2\in\mathcal{L}^2_{\mathcal{F}}([s,T],\mathbb{R}^m)$,
$$
\left\langle D_2\widetilde{X}^*,\widetilde{X}^*\right\rangle_{\mathcal{L}^2_{\mathcal{F}}([s,T],\mathbb{R}^n)}+\left\langle E_2\widetilde{u}_2, \widetilde{u}_2\right\rangle_{\mathcal{L}^2_{\mathcal{F}}([s,T],\mathbb{R}^m)}+\left\langle G_2\widetilde{X}^*(T),\widetilde{X}^*(T)\right\rangle_{\mathcal{L}^2_{\mathcal{F}_{T}}(\Omega,\mathbb{R}^n)}\\
\geq\epsilon_2\| \widetilde{u}_2\|_{\mathcal{L}^2_{\mathcal{F}}([s,T],\mathbb{R}^m)},
$$
where $(\widetilde{X}^*,\overline{\psi}_1,\widehat{\psi}_1)\in \mathcal{L}^2_{\mathcal{F}}(\Omega,C([s,T],\mathbb{R}^n))\times\mathcal{L}^2_{\mathcal{F}}(\Omega,C([s,T],\mathbb{R}^n))\times\mathcal{L}^2_{\mathcal{F}}([s,T],\mathbb{R}^n)$  is the unique solution to the FBSDE
\begin{equation}\label{11}
\begin{cases}
d\widetilde{X}^*=\left[(A-S_1\phi_1)\widetilde{X}^*-S_1\overline{\psi}_1+B_2\widetilde{u}_2\right]dt+C\widetilde{X}^*dB(t),\\
d\overline{\psi}_1=-\left[(A-S_1\phi_1)^{\top}\overline{\psi}_1+C^{\top}\widehat{\psi}_1+\phi_1B_2\widetilde{u}_2\right]+\widehat{\psi}_1dB(t),\\
\widetilde{X}^*(0)=0, \quad\overline{\psi}_1(T)=0^{n\times1}.
\end{cases}
\end{equation}

Concerning on the constraints in \eqref{3}, let
$$
\rho_i(u_2)=\langle X^*,\alpha_i\rangle_{\mathcal{L}^2_{\mathcal{F}}([s,T],\mathbb{R}^n)}+\langle u_1^*,\beta_i\rangle_{\mathcal{L}^2_{\mathcal{F}}([s,T],\mathbb{R}^m)}+\langle u_2,\gamma_i\rangle_{\mathcal{L}^2_{\mathcal{F}}([s,T],\mathbb{R}^m)}+\langle X^*(T),\delta_i\rangle_{\mathcal{L}^2_{\mathcal{F}_{T}}(\Omega,\mathbb{R}^n)},
$$
and $\rho=(\rho_1,\rho_2,\cdots,\rho_l)$, then it follows from \eqref{10} that
\begin{equation}\label{+1}
\rho(u_2)=\mathbb{E}\left(\displaystyle{\int_s^T}\left[(\alpha-\phi_1B_1E_1^{-1}\beta)^{\top}X^{*}-\beta^{\top}E_1^{-1}B_1^{\top}\psi_1+\gamma^{\top}u_2\right]dt+\delta^{\top}X^{*}(T)\right).
\end{equation}

Now we introduce the following relaxed problem for the leader's problem stated by \eqref{+8}.
\begin{prob}\label{13}
For any given $\lambda=(\lambda_1,\lambda_2,\cdots,\lambda_l)^{\top}$ with $\lambda_i\geq0$ ($i\in\mathcal{I}_1$) and $\lambda_i\in \mathbb{R}$ ($i\in\mathcal{I}_2$), find $u^{*}=u^*(\lambda)$ such that
\begin{equation}\label{+10}
\widetilde{J}(\lambda,u_2^*(\lambda))=\inf\limits_{u_2 \in \mathcal{L}^2_{\mathcal{F}}([s,T],\mathbb{R}^{n})}\widetilde{J}(\lambda,u_2),
\end{equation}
where
$$\widetilde{J}(\lambda,u_2)=J_2(X^*,u^*_1(u_2),u_2)+2\langle \rho(u_2)-a,\lambda\rangle.$$
\end{prob}
The solution to Problem \eqref{13} can be given as follows.
\begin{thm}\label{+7}
Under $(\mathbf{H1})$-$(\mathbf{H2})$, $u_2^*=u_2^*(\lambda)$ is a solution to Problem \eqref{13} if
\begin{equation}\label{5}
u^*_2=E_2^{-1}\left[B_2^{\top}p^*_2-\gamma\lambda+B_2^{\top}\phi_1Y^*\right],
\end{equation} where $(X^{**},Y^*,p_2^*,q_2^*,\psi_1^*,\widetilde{\psi}^*_1)$ is the unique solution to the fully
coupled FBSDEs
\begin{equation}\label{+4}
\begin{cases}
dX^{**}=\left[(A-S_1\phi_1)X^{**}-S_1\psi^*_1+B_2u^*_2\right]dt+CX^{**}dB_t, \quad t\in[s,T]\\
dY^*=\left[(A-S_1\phi_1)Y^*-S_1p_2^*+B_1E_1^{-1}\beta\lambda\right]dt+CY^*dB(t), \quad t\in[s,T],\\
dp_2^*=\left[D_2X^{**}-(A^{\top}-\phi_1 S_1)p_2^*-C^{\top}q_2^*+(\alpha-\phi_1B_1E_1^{-1}\beta)\lambda\right]dt+q_2^*dB(t), \quad t\in[s,T],\\
d\psi^*_1=-\left[(A^{\top}-\phi_1S_1)\psi^*_1+C^{\top}\widetilde{\psi}^*_1+\phi_1B_2u^*_2\right]dt+\widetilde{\psi}^*_1dB(t), \quad t\in[s,T],\\
X^{**}(0)=\xi,\quad Y^*(0)=0^{n\times1},\quad p_2^*(T)=-G_2X^{**}(T)-\delta\lambda,\quad\psi^*_1(T)=0^{n\times1}
\end{cases}
\end{equation}
such that $X^{**},Y^*,p_2^*,\psi_1^*\in\mathcal{L}^2_{\mathcal{F}}(\Omega,C([s,T],\mathbb{R}^n))$ and $q_2^*,\widetilde{\psi}^*_1\in\mathcal{L}^2_{\mathcal{F}}([s,T],\mathbb{R}^n)$.
\end{thm}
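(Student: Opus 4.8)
The plan is to read Problem \ref{13} as an \emph{unconstrained} linear-quadratic control problem whose ``state'' is the forward-backward system \eqref{+3}, and to prove optimality of the proposed $u_2^*$ by a convexity-plus-first-order-condition argument. Since $u_2\mapsto(X^*,\psi_1)$ is affine and $\rho$ is given by the affine expression \eqref{+1}, the penalty term $2\langle\rho(u_2)-a,\lambda\rangle$ is affine in $u_2$, so it contributes nothing to the curvature. Hence, for the candidate $u_2^*$ and any direction $\widetilde{u}_2\in\mathcal{L}^2_{\mathcal{F}}([s,T],\mathbb{R}^m)$ with induced state variation $(\widetilde{X}^*,\overline{\psi}_1,\widehat{\psi}_1)$ solving \eqref{11}, the increment $\widetilde{J}(\lambda,u_2^*+\widetilde{u}_2)-\widetilde{J}(\lambda,u_2^*)$ splits into a part linear in $\widetilde{u}_2$ (the first variation) and a purely quadratic part. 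The quadratic part involves only $\widetilde{X}^*$ and $\widetilde{u}_2$ and coincides exactly with the left-hand side of $(\mathbf{H2})$, so it is nonnegative; thus it suffices to show that the first variation vanishes for every $\widetilde{u}_2$ precisely when $u_2^*$ has the form \eqref{5}.

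To compute the first variation I would collect the direct contributions from $J_2$ and from $\langle\delta\rho,\lambda\rangle$ and then eliminate the state variations $\widetilde{X}^*,\overline{\psi}_1$ via two applications of It\^{o}'s formula using the adjoint processes in \eqref{+4}. Applying It\^{o} to $\langle p_2^*,\widetilde{X}^*\rangle$ on $[s,T]$ and invoking the $p_2^*$-equation of \eqref{+4} (whose drift carries $D_2X^{**}$ and $(\alpha-\phi_1B_1E_1^{-1}\beta)\lambda$, with terminal value $-G_2X^{**}(T)-\delta\lambda$), the $A$- and $C$-type terms cancel in conjugate pairs, yielding an identity for $\mathbb{E}\langle p_2^*(T),\widetilde{X}^*(T)\rangle$ whose integrand is $\langle D_2X^{**},\widetilde{X}^*\rangle+\langle(\alpha-\phi_1B_1E_1^{-1}\beta)\lambda,\widetilde{X}^*\rangle-\langle p_2^*,S_1\overline{\psi}_1\rangle+\langle B_2^{\top}p_2^*,\widetilde{u}_2\rangle$. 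Applying It\^{o} to $\langle Y^*,\overline{\psi}_1\rangle$ and using the $Y^*$-equation (forward, $Y^*(s)=0$, driven by $-S_1p_2^*+B_1E_1^{-1}\beta\lambda$) together with $\overline{\psi}_1(T)=0$ makes all boundary terms vanish and gives $\mathbb{E}\int_s^T\langle p_2^*,S_1\overline{\psi}_1\rangle\,dt=\mathbb{E}\int_s^T[\langle B_1E_1^{-1}\beta\lambda,\overline{\psi}_1\rangle-\langle B_2^{\top}\phi_1Y^*,\widetilde{u}_2\rangle]\,dt$. Substituting the second identity into the first cancels every $\widetilde{X}^*$- and $\overline{\psi}_1$-term against the corresponding term in the first variation, leaving $\tfrac12\delta\widetilde{J}[\widetilde{u}_2]=\mathbb{E}\int_s^T\langle E_2u_2^*+\gamma\lambda-B_2^{\top}\phi_1Y^*-B_2^{\top}p_2^*,\widetilde{u}_2\rangle\,dt$. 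Because $E_2$ is invertible, this vanishes for all $\widetilde{u}_2$ if and only if \eqref{5} holds; combined with the nonnegative quadratic part this yields $\widetilde{J}(\lambda,u_2^*+\widetilde{u}_2)\ge\widetilde{J}(\lambda,u_2^*)$, so $u_2^*$ solves Problem \ref{13}. Note that the solvability of \eqref{+4} is part of the hypothesis, so only the verification of optimality is needed.

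The main obstacle I anticipate is the bookkeeping of the two It\^{o} expansions. Because the underlying ``state'' \eqref{+3} is itself a coupled FBSDE, a single costate does not suffice: one needs both the backward costate $p_2^*$ attached to the forward component $X^*$ and the forward dual process $Y^*$ attached to the backward component $\psi_1$, and their cross terms — in particular the $\langle p_2^*,S_1\overline{\psi}_1\rangle$ coupling produced by the $-S_1\psi_1$ drift — must cancel exactly. This cancellation is what pins down the precise drivers and, crucially, the boundary data in \eqref{+4}: the condition $\widetilde{X}^*(s)=0$ forces $Y^*(s)=0$, while $\overline{\psi}_1(T)=0$ must pair with $p_2^*(T)=-G_2X^{**}(T)-\delta\lambda$, so that every boundary contribution either vanishes or reproduces a term of $J_2$ and $\rho$. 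Checking that these matchings are mutually consistent — i.e. that \eqref{+4} is exactly the adjoint system encoding stationarity — is the delicate part; once it is in place, the coercivity furnished by $(\mathbf{H2})$ upgrades stationarity to global optimality.
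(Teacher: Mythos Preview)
Your proposal is correct and follows essentially the same approach as the paper: both split the increment of $\widetilde{J}$ into a quadratic part controlled by $(\mathbf{H2})$ and a linear first-variation part, and both eliminate the state variations $(\widetilde{X}^*,\overline{\psi}_1)$ via It\^{o}'s formula applied to the pairings with the adjoint processes $(p_2^*,Y^*)$ from \eqref{+4}. The only cosmetic difference is that the paper applies It\^{o} directly to the combination $\langle\widetilde{X}^*,p_2^*\rangle-\langle\overline{\psi}_1,Y^*\rangle$ in one stroke, whereas you handle the two pairings separately and then substitute; the resulting identity and conclusion are identical.
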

\begin{proof}
For any pair $(X^*,\psi_1,\widetilde{\psi}_1,u_2)$ governed by \eqref{+3}, let $$(\widetilde{X}^*,\overline{\psi}_1,\widehat{\psi}_1,\widetilde{u}_2)=(X^{**}-X^*,\psi_1^*-\psi_1,\widetilde{\psi}_1^*-\widetilde{\psi}_1,u^*_2(\lambda)-u_2),$$ which satisfies \eqref{11}. We have
\begin{align*}
\triangle J_2=&\frac{1}{2}\left[\widetilde{J}(\lambda,u_2^*(\lambda))-\widetilde{J}(\lambda,u_2)\right]\\
=&-\frac{1}{2}\left\langle D_2\widetilde{X}^*,\widetilde{X}^*\right\rangle_{\mathcal{L}^2_{\mathcal{F}}([s,T],\mathbb{R}^n)}-\frac{1}{2}\left\langle E_2\widetilde{u}_2,\widetilde{u}_2\right\rangle_{\mathcal{L}^2_{\mathcal{F}}([s,T],\mathbb{R}^m)}
-\frac{1}{2}\left\langle G_2\widetilde{X}^*(T),\widetilde{X}^*(T)\right\rangle_{\mathcal{L}^2_{\mathcal{F}_{T}}(\Omega,\mathbb{R}^n)}\\
&\mbox{}+\left\langle D_2\widetilde{X}^*,X^{**}\right\rangle_{\mathcal{L}^2_{\mathcal{F}}([s,T],\mathbb{R}^n)}+\left\langle E_2\widetilde{u}_2,u^*_2\right\rangle_{\mathcal{L}^2_{\mathcal{F}}([s,T],\mathbb{R}^m)}
+\left\langle G_2\widetilde{X}^*(T),X^{**}(T)\right\rangle_{\mathcal{L}^2_{\mathcal{F}_{T}}(\Omega,\mathbb{R}^n)}
+\langle \rho(\widetilde{u}_2),\lambda\rangle.
\end{align*}
Then, applying the It\^{o} formula to $\langle\widetilde{X}^*,p_2^*\rangle-\langle\overline{\psi}_1,Y^*\rangle$ gives
\begin{align*}
\mathbb{E}\left[\langle\widetilde{X}^*(T),-G_2X^{**}(T)\rangle\right]=&\left\langle \widetilde{X}^*,D_2X^{**}\right\rangle_{\mathcal{L}^2_{\mathcal{F}}([s,T],\mathbb{R}^n)}+\langle \rho(\widetilde{u}_2),\lambda\rangle\\
&\mbox{}+\left\langle \widetilde{u}_2,B^{\top}_2p_2^*+B^{\top}_2\phi_1Y^*-\gamma\lambda\right\rangle_{\mathcal{L}^2_{\mathcal{F}}([s,T],\mathbb{R}^m)}.
\end{align*}
Therefore, combining the above arguments, one has
$$
\triangle J_2\leq -\frac{\epsilon_2}{2}\| \widetilde{u}_2\|^2_{\mathcal{L}^2_{\mathcal{F}}([s,T],\mathbb{R}^m)}+\left\langle \widetilde{u}_2,E_2u^*_2-(B^{\top}_2p_2^*+B^{\top}_2\phi_1Y^*-\gamma\lambda)\right\rangle_{\mathcal{L}^2_{\mathcal{F}}([s,T],\mathbb{R}^m)}
=-\frac{\epsilon_2}{2}\| \widetilde{u}_2\|^2_{\mathcal{L}^2_{\mathcal{F}}([s,T],\mathbb{R}^m)},
$$
which ends the proof.
\end{proof}
Now we focus on FBSDEs \eqref{+4}. Note that $(X^{**},p_2^*,q_2^*)$ and $(Y^*,\psi_1^*,\widetilde{\psi}^*_1)$ form two fully coupled FBSDEs. Thus by setting $S_2=B_2E_2^{-1}B^{\top}_2$ and
$$
Z=
\begin{bmatrix}
X^{**}\\
Y^*
\end{bmatrix},\quad
P=
\begin{bmatrix}
p_2^*\\
\psi_1^*
\end{bmatrix},\quad
Q=
\begin{bmatrix}
q_2^*\\
\widetilde{\psi}_1^*
\end{bmatrix},\quad
\overline{F}=
\begin{bmatrix}
S_2 &-S_1\\
-S_1&0^{n\times n}
\end{bmatrix},\quad\overline{C}=
\begin{bmatrix}
C& 0^{n\times n}\\
0^{n\times n}& C
\end{bmatrix},
$$
$$
\overline{A}=
\begin{bmatrix}
A-S_1\phi_1&S_2\phi_1\\
0^{n\times n}& A-S_1\phi_1
\end{bmatrix},\quad
\overline{B}_2=
\begin{bmatrix}
0^{n\times m}\\
\phi_1B_2
\end{bmatrix},\quad
\overline{D}=
\begin{bmatrix}
D_2&0^{n\times n}\\
0^{n\times n}&\phi_1S_2\phi_1
\end{bmatrix},
$$
the FBSDEs \eqref{+4} can be rewritten as
\begin{equation}\label{12}
\begin{cases}
dZ=\left[\overline{A}Z+\overline{F}P+
\begin{bmatrix}
-B_2E_2^{-1}\gamma\\
B_1E_1^{-1}\beta
\end{bmatrix}\lambda\right]dt+\overline{C}ZdB_t, \quad t\in[s,T]\\
dP=\left[\overline{D}Z-\overline{A}^{\top}P-\overline{C}^{\top}Q+
\begin{bmatrix}
\alpha-\phi_1B_1E_1^{-1}\beta\\
\phi_1B_2E_2^{-1}\gamma
\end{bmatrix}\lambda\right]dt+QdB(t), \quad t\in[s,T],\\
Z(s)=
\begin{bmatrix}
\xi\\
0^{n\times1}
\end{bmatrix},\quad P(T)=\begin{bmatrix}
-G_2X^{**}(T)-\delta\lambda\\
0^{n\times1}
\end{bmatrix},
\end{cases}
\end{equation}
and the optimal control \eqref{5} can be rewritten as
\begin{equation}\label{1}
u_2^*=E_2^{-1}\left(\begin{bmatrix}
B_2^{\top}&
0^{m\times n}
\end{bmatrix}P+\begin{bmatrix}
0^{m\times n}&
B_2^{\top}\phi_1
\end{bmatrix}Z-\gamma\lambda\right).
\end{equation}
Following \cite{ma1999forward, yong2002leader}, we now use the idea of the four-step scheme to study the solvability of the new FBSDE \eqref{12}. Setting $P=-\phi_2Z-\psi_2\lambda$, one can check that $(\phi_2,\widetilde{\phi}_2)$ is governed by the SRE
\begin{equation}\label{+5}
\begin{cases}
d\phi_2=-\left[\overline{A}^{\top}\phi_2+\phi_2\overline{A}+\overline{C}^{\top}\widetilde{\phi}_2+\widetilde{\phi}_2\overline{C}+\overline{C}^{\top}{\phi}_2\overline{C}
-\phi_2\overline{F}\phi_2+\overline{D}\right]dt+\widetilde{\phi}_2dB(t), \quad t\in[s,T],\\
\phi_2(T)=\begin{bmatrix}
G_2&0^{n\times n}\\
0^{n\times n}&0^{n\times n}
\end{bmatrix},
\end{cases}
\end{equation}
and $(\psi_2,\widetilde{\psi}_2)$ is governed by the BSDE
\begin{equation}\label{+6}
\begin{cases}
d\psi_2=-\Bigg[(\overline{A}^{\top}-\phi_2\overline{F})\psi_2+\overline{C}^{\top}\widetilde{\psi}_2
+\begin{bmatrix}
\alpha-\phi_1B_1E_1^{-1}\beta\\
\phi_1B_2E_2^{-1}\gamma
\end{bmatrix}+\phi_2\begin{bmatrix}
-B_2E_2^{-1}\gamma\\
B_1E_1^{-1}\beta
\end{bmatrix}\Bigg]dt+\widetilde{\psi}_2dB(t), \quad t\in[s,T],\\
\psi_2(T)=\begin{bmatrix}
\delta\\
0^{n\times l}
\end{bmatrix}.
\end{cases}
\end{equation}

It is worth mentioning that
the SRE \eqref{+5} is not standard because $\overline{F}$ is not positive definite in general. Thus we need the following assumption for ensuring the solvability of \eqref{+5}.

\noindent $(\mathbf{H3})$ There exists a unique solution $(\phi_2,\widetilde{\phi}_2)\in\mathcal{L}^{\infty}_{\mathcal{F}}(\Omega,C([s,T],\mathcal{S}^{2n}))\times\mathcal{L}^2_{\mathcal{F}}([s,T],\mathcal{S}^{2n})$ to \eqref{+5}.

Then, depending on the new state variable $Z$, the feedback solution $u_2^*=u_2^*(\lambda)$ to Problem \ref{13} can be given as follows.
\begin{thm}\label{38}
Under  $(\mathbf{H1})$-$(\mathbf{H3})$, one feedback solution to Problem \eqref{13} can be given as:
\begin{equation}\label{15}
u_2^*=\left(\begin{bmatrix}
0^{m\times n}&
E_2^{-1}B_2^{\top}\phi_1
\end{bmatrix}-\begin{bmatrix}
E_2^{-1}B_2^{\top}&
0^{m\times n}
\end{bmatrix}\phi_2\right)Z-\left(\begin{bmatrix}
E_2^{-1}B_2^{\top}&
0^{m\times n}
\end{bmatrix}\psi_2+E_2^{-1}\gamma\right)\lambda.
\end{equation}
\end{thm}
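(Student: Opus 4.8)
The plan is to verify that the feedback form \eqref{15} is simply the substitution of the Riccati ansatz $P = -\phi_2 Z - \psi_2\lambda$ into the optimal control expression \eqref{1}, and then to confirm that this ansatz is consistent with the coupled FBSDE \eqref{12}. First I would take the decoupling relation $P = -\phi_2 Z - \psi_2\lambda$ as given (its well-posedness being exactly the content of $(\mathbf{H3})$ together with the solvability of the BSDE \eqref{+6}) and substitute it directly into \eqref{1}. This yields
\begin{align*}
u_2^* &= E_2^{-1}\left(\begin{bmatrix}B_2^{\top}&0^{m\times n}\end{bmatrix}(-\phi_2 Z-\psi_2\lambda)+\begin{bmatrix}0^{m\times n}&B_2^{\top}\phi_1\end{bmatrix}Z-\gamma\lambda\right)\\
&=\left(\begin{bmatrix}0^{m\times n}&E_2^{-1}B_2^{\top}\phi_1\end{bmatrix}-\begin{bmatrix}E_2^{-1}B_2^{\top}&0^{m\times n}\end{bmatrix}\phi_2\right)Z-\left(\begin{bmatrix}E_2^{-1}B_2^{\top}&0^{m\times n}\end{bmatrix}\psi_2+E_2^{-1}\gamma\right)\lambda,
\end{align*}
which is precisely \eqref{15}. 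So at the level of algebra the feedback form is immediate once the ansatz is justified.

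The substantive work is to confirm that setting $P=-\phi_2 Z-\psi_2\lambda$ is indeed compatible with the forward-backward system \eqref{12}, thereby showing that the $(Z,P,Q)$ solving \eqref{12} reproduces the $u_2^*$ of Theorem \ref{+7}. To do this I would apply the It\^o formula to $P=-\phi_2 Z-\psi_2\lambda$, using the dynamics of $Z$ from the forward equation in \eqref{12}, the SRE \eqref{+5} for $(\phi_2,\widetilde\phi_2)$, and the BSDE \eqref{+6} for $(\psi_2,\widetilde\psi_2)$. Matching the resulting $dt$-drift against the backward drift $\overline{D}Z-\overline{A}^{\top}P-\overline{C}^{\top}Q$ prescribed in \eqref{12} should, after collecting the terms multiplying $Z$ and the terms multiplying $\lambda$ separately, reduce exactly to the SRE \eqref{+5} (the $Z$-coefficient) and the BSDE \eqref{+6} (the $\lambda$-coefficient); matching the diffusion terms identifies $Q=-(\widetilde\phi_2+\phi_2\overline{C})Z-\widetilde\psi_2\lambda$. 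One also checks the terminal condition: at $t=T$ the ansatz gives $P(T)=-\phi_2(T)Z(T)-\psi_2(T)\lambda$, and substituting $\phi_2(T)=\mathrm{diag}(G_2,0)$ and $\psi_2(T)=[\delta;0]^{\top}$ recovers the prescribed $P(T)=[-G_2X^{**}(T)-\delta\lambda;\,0]$ in \eqref{12}.

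The main obstacle I anticipate is the bookkeeping in separating the $Z$-linear and $\lambda$-linear contributions when equating drifts, since both $\phi_2$ and $\psi_2$ carry block structure and the affine inhomogeneous vectors $[-B_2E_2^{-1}\gamma;\,B_1E_1^{-1}\beta]$ and $[\alpha-\phi_1B_1E_1^{-1}\beta;\,\phi_1B_2E_2^{-1}\gamma]$ must be routed to the correct equation. The coefficient of $Z$ will produce the quadratic Riccati term $-\phi_2\overline{F}\phi_2$ (upon substituting $P=-\phi_2 Z$ into $\overline{F}P$), which is the reason \eqref{+5} takes its stated form; the coefficient of $\lambda$ must reproduce every source term in \eqref{+6}, including the cross term $\phi_2[-B_2E_2^{-1}\gamma;\,B_1E_1^{-1}\beta]$. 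Once these two matchings are verified term-by-term, Theorem \ref{+7} guarantees optimality and the feedback representation \eqref{15} follows; the four-step-scheme solvability is supplied by $(\mathbf{H3})$, so no further existence argument is needed here.
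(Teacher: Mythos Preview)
Your proposal is correct and follows essentially the same route as the paper: substitute the decoupling ansatz $P=-\phi_2 Z-\psi_2\lambda$ into \eqref{1} to obtain \eqref{15}, use $(\mathbf{H3})$ together with standard BSDE theory for the solvability of \eqref{+6}, and then invoke Theorem \ref{+7} once the resulting $(Z,P,Q)$ is shown to solve \eqref{12}. The paper is simply terser---it asserts that ``substituting $P=-\phi_2 Z-\psi_2\lambda$ into \eqref{12}, it is easy to observe that \eqref{12} admits a unique solution'' and cites external results for the linear BSDE and forward SDE, whereas you plan to carry out the It\^o-formula verification of the drift, diffusion, and terminal matchings explicitly; but the underlying four-step-scheme argument is identical.
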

\begin{proof}
Under condition $(\mathbf{H3})$, since $\mathcal{L}^2_{\mathcal{F}}([s,T],\mathbb{R}^{2n\times l})\subset\mathcal{L}^2_{\mathcal{F}}(\Omega,\mathcal{L}^1([s,T],\mathbb{R}^{2n\times l}))$,  there exists a unique solution $(\psi_2,\widetilde{\psi}_2)\in\mathcal{L}^2_{\mathcal{F}}(\Omega,C([s,T],\mathbb{R}^{2n\times l}))\times\mathcal{L}^2_{\mathcal{F}}([s,T],\mathbb{R}^{2n\times l})$ to the BSDE \eqref{+6} (\cite[Proposition 3.2]{yong2020stochastic}). Moreover, substituting $P=-\phi_2Z-\psi_2\lambda$ into \eqref{12}, it is easy to observe that \eqref{12} admits a unique solution $(Z,P,Q)\in\mathcal{L}^2_{\mathcal{F}}(\Omega,C([s,T],\mathbb{R}^{2n}))\times\mathcal{L}^2_{\mathcal{F}}(\Omega,C([s,T],\mathbb{R}^{2n}))\times\mathcal{L}^2_{\mathcal{F}}([s,T],\mathbb{R}^{2n})$ (\cite[Theorem 1.25]{yong2020stochastic}), and so the solvability condition of \eqref{+4} in Theorem \ref{+7} can be satisfied. Thus by Theorem \ref{+7},  $u_2^*$ given by \eqref{15} is the optimal feedback strategy of the leader.
\end{proof}
Before introducing the dual problem for the leader, we need to derive  much simpler expressions of $\rho(u_2^*(\lambda))$ and the value function $\widetilde{J}(\lambda,u_2^*(\lambda))$, which can be presented by
the following two propositions.
\begin{prop}\label{+2}
It holds that
$$
\rho(u_2^*(\lambda))=\mathbb{E}[\psi_2^{\top}(s)]Z(s)+\mathbb{E}\left(\displaystyle{\int_s^T}\left[\psi_2^{\top}\begin{bmatrix}
-B_2E_2^{-1}\gamma\\
B_1E_1^{-1}\beta
\end{bmatrix}+\begin{bmatrix}
-B_2E_2^{-1}\gamma\\
B_1E_1^{-1}\beta
\end{bmatrix}^{\top}\psi_2-\gamma^{\top}E_2^{-1}\gamma-\psi_2^{\top}\overline{F}\psi_2\right]dt\right)\lambda.
$$
\end{prop}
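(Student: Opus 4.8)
The plan is to express the constraint functional $\rho(u_2^*(\lambda))$ entirely through the augmented variables $Z,P$ of the decoupled system \eqref{12}, and then to eliminate the terminal term $\mathbb{E}[\delta^\top X^*(T)]$ by an It\^o computation against the matrix-valued adjoint $\psi_2$ solving \eqref{+6}. Throughout I write $R=\bigl[\begin{smallmatrix}-B_2E_2^{-1}\gamma\\ B_1E_1^{-1}\beta\end{smallmatrix}\bigr]$ for the inhomogeneous drift block of \eqref{12} and $W=\bigl[\begin{smallmatrix}\alpha-\phi_1B_1E_1^{-1}\beta\\ \phi_1B_2E_2^{-1}\gamma\end{smallmatrix}\bigr]$ for the source block appearing in \eqref{+6}, these being only local shorthand; I also use that $X^{**}=X^*$ and $\psi_1^*=\psi_1$ along the optimal trajectory, that $Z(s)=[\xi;0]$ is deterministic, and that $\phi_2\in\mathcal S^{2n}$ and $\overline F$ are symmetric.

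First I would recast the integrand of $\rho$ in \eqref{+1}. Substituting the feedback control \eqref{1} (equivalently \eqref{5}) for $u_2^*$ and regrouping the scalar products block by block, the first block of $W^\top Z$ reproduces $(\alpha-\phi_1B_1E_1^{-1}\beta)^\top X^{**}$ while its second block supplies the $\gamma^\top E_2^{-1}B_2^\top\phi_1Y^*$ coming from $\gamma^\top u_2^*$; the remaining pieces $-\beta^\top E_1^{-1}B_1^\top\psi_1^*+\gamma^\top E_2^{-1}B_2^\top p_2^*$ assemble into $-R^\top P$, and the last piece of $\gamma^\top u_2^*$ gives $-\gamma^\top E_2^{-1}\gamma\lambda$. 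This yields $\rho(u_2^*(\lambda))=\mathbb{E}\int_s^T\bigl[W^\top Z-R^\top P-\gamma^\top E_2^{-1}\gamma\lambda\bigr]dt+\mathbb{E}[\delta^\top X^*(T)]$. Checking that the alignment of the $2n$-dimensional blocks is exact is the main bookkeeping step and the place most prone to sign and transpose slips.

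Next I would apply It\^o's formula to the $\mathbb{R}^{l}$-valued product $\psi_2^\top Z$ using \eqref{12} for $dZ$ and \eqref{+6} for $d\psi_2$, take expectations so that the $dB$-martingales drop, and use the symmetry of $\phi_2$ and $\overline F$. The $\overline A$-drift and the $\overline C$-It\^o-correction terms cancel in pairs, and after substituting $P=-\phi_2Z-\psi_2\lambda$ the coupling term $\psi_2^\top\overline F\phi_2 Z+\psi_2^\top\overline F P$ collapses to $-\psi_2^\top\overline F\psi_2\lambda$. Reading off the terminal value from $\psi_2(T)=[\delta;0]$ and $Z(T)=[X^{**}(T);Y^*(T)]$ gives $\mathbb{E}[\psi_2^\top(T)Z(T)]=\mathbb{E}[\delta^\top X^*(T)]$, so the identity becomes an expression for $\mathbb{E}[\delta^\top X^*(T)]$ in terms of $\mathbb{E}[\psi_2^\top(s)]Z(s)$ and the integral of $-W^\top Z-R^\top\phi_2Z-\psi_2^\top\overline F\psi_2\lambda+\psi_2^\top R\lambda$.

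Finally I would substitute this expression for $\mathbb{E}[\delta^\top X^*(T)]$ into the formula for $\rho$ obtained in the first step. The two $W^\top Z$ integrals cancel, and combining $-R^\top P-R^\top\phi_2Z=-R^\top(P+\phi_2Z)=R^\top\psi_2\lambda$ (again via $P+\phi_2Z=-\psi_2\lambda$) leaves exactly $\mathbb{E}[\psi_2^\top(s)]Z(s)+\mathbb{E}\int_s^T[\psi_2^\top R+R^\top\psi_2-\gamma^\top E_2^{-1}\gamma-\psi_2^\top\overline F\psi_2]\,dt\,\lambda$, which is the asserted identity once $R$ is written out. I expect the genuine difficulties to be purely organizational, namely keeping the orientation of the matrix-valued $\psi_2$ consistent and correctly matching the block decomposition in the first step, rather than any analytic subtlety, since solvability and integrability of all processes are already secured by $(\mathbf{H1})$–$(\mathbf{H3})$ and Theorem \ref{38}.
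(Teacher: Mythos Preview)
Your proposal is correct and follows essentially the same approach as the paper: both proofs apply It\^o's formula to $\psi_2^\top Z$, exploit the decoupling relation $P=-\phi_2Z-\psi_2\lambda$ to collapse the $\overline F$-terms, and read off the terminal value $\mathbb{E}[\psi_2^\top(T)Z(T)]=\mathbb{E}[\delta^\top X^{**}(T)]$ to absorb the terminal contribution in \eqref{+1}. The only difference is organizational: you first rewrite the integrand of $\rho$ in the block form $W^\top Z-R^\top P-\gamma^\top E_2^{-1}\gamma\lambda$ and then do the It\^o step, whereas the paper carries out the It\^o computation first and only afterwards, via \eqref{5}, recognizes the pieces of the $\rho$-integrand inside the result; the algebra is identical and your ordering arguably makes the cancellations cleaner.
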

\begin{proof}
Applying the It\^{o} formula to $\psi_2^{\top}Z$, one has
\begin{align*}
\mathbb{E}[\psi_2^{\top}(T)Z(T)-\psi_2^{\top}(s)Z(s)]=&\mbox{}-\mathbb{E}\displaystyle{\int_s^T}\left[\begin{bmatrix}
\alpha-\phi_1B_1E_1^{-1}\beta\\
\phi_1B_2E_2^{-1}\gamma
\end{bmatrix}^{\top}Z+\begin{bmatrix}
-B_2E_2^{-1}\gamma\\
B_1E_1^{-1}\beta
\end{bmatrix}^{\top}\phi_2Z\right]dt\\
&\mbox{}+\mathbb{E}\displaystyle{\int_s^T}\psi_2^{\top}\left[\overline{F}P+\begin{bmatrix}
-B_2E_2^{-1}\gamma\\
B_1E_1^{-1}\beta
\end{bmatrix}\lambda+\overline{F}\phi_2Z\right]dt\\
=&\mathbb{E}\displaystyle{\int_s^T}\left[\begin{bmatrix}
-B_2E_2^{-1}\gamma\\
B_1E_1^{-1}\beta
\end{bmatrix}^{\top}(P+\psi_2\lambda)-(\alpha-\phi_1B_1E_1^{-1}\beta)^{\top}X^{**}\right]dt\\
&\mbox{}+\mathbb{E}\displaystyle{\int_s^T}\left[-\gamma^{\top}E_2^{-1}B_2^{\top}\phi_1Y^*+\psi_2^{\top}\begin{bmatrix}
-B_2E_2^{-1}\gamma\\
B_1E_1^{-1}\beta
\end{bmatrix}\lambda-\psi_2^{\top}\overline{F}\psi_2\lambda\right]dt.
\end{align*}
Then according to \eqref{5}, we have
\begin{align*}
\mathbb{E}\left[\delta^{\top}X^{**}(T)-\psi_2^{\top}(s)Z(s)\right]=&-\mathbb{E}{\int_s^T}\left[(\alpha-\phi_1B_1E_1^{-1}\beta)^{\top}X^{**}+\gamma^{\top}u_2^*+\gamma^{\top}E_2^{-1}\gamma\lambda+\psi_2^{\top}\overline{F}\psi_2\lambda\right]dt\\
&\mbox{}+\mathbb{E}\displaystyle{\int_s^T}\left[\beta^{\top}E_1^{-1}B_1^{\top}\psi_1^*+\psi_2^{\top}\begin{bmatrix}
-B_2E_2^{-1}\gamma\\
B_1E_1^{-1}\beta
\end{bmatrix}\lambda+\begin{bmatrix}
-B_2E_2^{-1}\gamma\\
B_1E_1^{-1}\beta
\end{bmatrix}^{\top}\psi_2\lambda\right]dt.
\end{align*}
Combining \eqref{+1}, the result follows immediately.
\end{proof}

\begin{prop}\label{39}
 We have
\begin{equation}\label{18}
\widetilde{J}(\lambda,u_2^*(\lambda))=Z^{\top}(s)\mathbb{E}[\phi_2(s)]Z(s)+2\langle\mathbb{E}[\psi^{\top}_2(s)Z(s)]- a,\lambda\rangle-\langle S_0\lambda,\lambda\rangle,
\end{equation}
where
\begin{equation}\label{+12}
S_0=-\mathbb{E}\left(\displaystyle{\int_s^T}\left[\psi_2^{\top}\begin{bmatrix}
-B_2E_2^{-1}\gamma\\
B_1E_1^{-1}\beta
\end{bmatrix}+\begin{bmatrix}
-B_2E_2^{-1}\gamma\\
B_1E_1^{-1}\beta
\end{bmatrix}^{\top}\psi_2-\gamma^{\top}E_2^{-1}\gamma-\psi_2^{\top}\overline{F}\psi_2\right]dt\right).
\end{equation}
\end{prop}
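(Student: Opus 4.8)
The plan is to use the just-established Proposition \ref{+2} to dispose of the $\rho$-part of $\widetilde{J}$, thereby reducing \eqref{18} to a single value-function identity for the augmented state $Z$, and then to prove that identity by It\^o's formula in the same spirit as Theorem \ref{42}. Since the initial datum $Z(s)=(\xi^{\top},0)^{\top}$ is deterministic, Proposition \ref{+2} may be read as $\rho(u_2^*(\lambda))=\mathbb{E}[\psi_2^{\top}(s)Z(s)]-S_0\lambda$ with $S_0$ as in \eqref{+12}. Substituting this into $\widetilde{J}(\lambda,u_2^*(\lambda))=J_2(X^{**},u_1^*,u_2^*)+2\langle\rho(u_2^*(\lambda))-a,\lambda\rangle$ shows that \eqref{18} is equivalent to
\begin{equation*}
J_2(X^{**},u_1^*,u_2^*)=Z^{\top}(s)\mathbb{E}[\phi_2(s)]Z(s)+\langle S_0\lambda,\lambda\rangle,
\end{equation*}
because the $-2\langle S_0\lambda,\lambda\rangle$ produced by the $\rho$-part then combines with a $+\langle S_0\lambda,\lambda\rangle$ coming from $J_2$ to leave the single $-\langle S_0\lambda,\lambda\rangle$ in \eqref{18}. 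Everything thus reduces to evaluating the cost $J_2$ along the leader's optimal trajectory.

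To establish this identity I would apply It\^o's formula to $\langle\phi_2 Z,Z\rangle$ on $[s,T]$ and take expectation. The block terminal condition $\phi_2(T)$ reproduces the terminal cost $\langle G_2X^{**}(T),X^{**}(T)\rangle$ of $J_2$, and the initial value gives $Z^{\top}(s)\mathbb{E}[\phi_2(s)]Z(s)$. In the drift, the Riccati equation \eqref{+5} annihilates every term carrying $\overline{A}$, $\overline{C}$ and $\widetilde{\phi}_2$, leaving the Riccati-quadratic $\langle\phi_2\overline{F}\phi_2 Z,Z\rangle$, the cost contribution $-\langle\overline{D}Z,Z\rangle$, and the $P$- and $\lambda$-linear pieces $2\langle\phi_2\overline{F}P,Z\rangle+2\langle\phi_2\begin{bmatrix}-B_2E_2^{-1}\gamma\\B_1E_1^{-1}\beta\end{bmatrix}\lambda,Z\rangle$. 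Inserting the feedback relation $P=-\phi_2 Z-\psi_2\lambda$ from \eqref{12} and the optimal control \eqref{1}, then adding the running cost $\langle D_2X^{**},X^{**}\rangle+\langle E_2u_2^*,u_2^*\rangle$ of $J_2$, one uses $\langle\overline{D}Z,Z\rangle=\langle D_2X^{**},X^{**}\rangle+\langle\phi_1 S_2\phi_1 Y^*,Y^*\rangle$ and the block form of $\overline{F}$ to see that the control-cost quadratic absorbs both the term $\langle\phi_2\overline{F}\phi_2 Z,Z\rangle$ and the spurious $\langle\phi_1 S_2\phi_1 Y^*,Y^*\rangle$ block. Consequently every $Z$-quadratic and every $Z$-$\lambda$ cross term cancels, and the surviving running integral is a pure quadratic form in $\lambda$ whose integrand is exactly the one defining $S_0$ in \eqref{+12}. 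This yields the displayed identity, and hence \eqref{18}.

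The main obstacle is the algebraic bookkeeping in this last cancellation: one must check, using the explicit $2\times2$ block structure of $\overline{A}$, $\overline{D}$, $\overline{F}$ and the coupling through $\phi_1$, that after substituting $P=-\phi_2 Z-\psi_2\lambda$ and \eqref{1} all state-dependent terms vanish and the remaining $\lambda$-quadratic coefficient is precisely $S_0$, with the correct sign. A convenient internal check is that this surviving integrand is the negative of the $\lambda$-coefficient integrand already isolated in the proof of Proposition \ref{+2} (where It\^o was applied to $\psi_2^{\top}Z$); the two computations are complementary, the present one handling the quadratic form $\langle\phi_2 Z,Z\rangle$ and the earlier one the bilinear form $\psi_2^{\top}Z$.
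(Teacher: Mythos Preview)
Your reduction via Proposition \ref{+2} to the identity $J_2(X^{**},u_1^*,u_2^*)=Z^{\top}(s)\mathbb{E}[\phi_2(s)]Z(s)+\langle S_0\lambda,\lambda\rangle$ is correct, but the subsequent claim that applying It\^o to $\langle\phi_2 Z,Z\rangle$ alone leaves a running integrand that is a \emph{pure $\lambda$-quadratic} is false. After your It\^o step the $Z$-quadratic residue to be killed is
\[
\langle E_2u_2^*,u_2^*\rangle\big|_{Z\text{-quad}}-(Y^*)^{\top}\phi_1S_2\phi_1Y^*-Z^{\top}\phi_2\overline{F}\phi_2 Z,
\]
and with $u_2^*=E_2^{-1}\big(R_1P+R_2Z-\gamma\lambda\big)$, $R_1=[B_2^{\top}\ 0]$, $R_2=[0\ B_2^{\top}\phi_1]$, $P=-\phi_2Z-\psi_2\lambda$, this vanishes if and only if
\[
\phi_2\bigl(R_1^{\top}E_2^{-1}R_1-\overline{F}\bigr)\phi_2
= R_2^{\top}E_2^{-1}R_1\,\phi_2+\phi_2\,R_1^{\top}E_2^{-1}R_2.
\]
Since $R_1^{\top}E_2^{-1}R_1-\overline{F}=\begin{bmatrix}0&S_1\\S_1&0\end{bmatrix}$ involves the \emph{follower's} matrix $S_1$, while the right side involves $S_2\phi_1$, there is no algebraic reason for this to hold; in the paper's own Example (Section 5) one has $\phi_2=\mathrm{diag}(-1,\,1-e^{-t})$ and the two sides are $-\tfrac{1-e^{-t}}{16}\begin{bmatrix}0&1\\1&0\end{bmatrix}$ versus $-\tfrac{1}{16}\begin{bmatrix}0&1\\1&0\end{bmatrix}$, which differ. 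So the pointwise cancellation you assert does not occur, and the same obstruction blocks the $Z$--$\lambda$ cross terms.

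The paper avoids this by applying It\^o not to $\langle\phi_2Z,Z\rangle$ but to the Hamiltonian pairing $\langle X^{**},p_2^*\rangle-\langle Y^*,\psi_1^*\rangle$ directly, which immediately produces $J_2$- and $\rho$-type terms and yields
$\widetilde{J}(\lambda,u_2^*)=Z^{\top}(s)\mathbb{E}[\phi_2(s)]Z(s)+Z^{\top}(s)\mathbb{E}[\psi_2(s)]\lambda+\langle\rho(u_2^*),\lambda\rangle-2\langle a,\lambda\rangle$; one then substitutes Proposition \ref{+2}. If you want to salvage a completing-the-square argument in the spirit of Theorem \ref{42}, the correct quantity is $\langle\phi_2Z+2\psi_2\lambda,\,Z\rangle$: the extra $2\langle\psi_2\lambda,Z\rangle$ piece, together with the BSDE \eqref{+6} for $\psi_2$, supplies precisely the missing $S_1$-related terms and makes the state-dependent part cancel.
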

\begin{proof}
Applying the It\^{o} formula to $\langle X^{**},p_2^*\rangle-\langle Y^*,\psi_2^*\rangle$ derives
\begin{align*}
&\mathbb{E}\left[\langle X^{**}(T),p_2^*(T)\rangle-\langle Y^*(T),\psi_2^*(T)\rangle-\langle X^{**}(s),p_2^*(s)\rangle+\langle Y^*(s),\psi_2^*(s)\rangle\right]\\
=&\left\langle B_2u_2^*,p^*_2\right\rangle_{\mathcal{L}^2_{\mathcal{F}}([s,T],\mathbb{R}^m)}+\left\langle D_2X^{**}+(\alpha-\phi_1B_1E_1^{-1}\beta)\lambda,X^{**}\right\rangle_{\mathcal{L}^2_{\mathcal{F}}([s,T],\mathbb{R}^n)}\\
&\mbox{}-\left\langle B_1E_1^{-1}\beta\lambda,\psi^*_1\right\rangle_{\mathcal{L}^2_{\mathcal{F}}([s,T],\mathbb{R}^n)}+\left\langle Y^*,\phi_1B_2u_2^*\right\rangle_{\mathcal{L}^2_{\mathcal{F}}([s,T],\mathbb{R}^n)}.
\end{align*}
Then, it follows from \eqref{1} that
\begin{align*}
&\mathbb{E}\left[\langle X^{**}(T),-G_2X^{**}(T)-\delta\lambda\rangle-\langle \xi,p_2^*(s)\rangle\right]\\
=&\mathbb{E}\left[\langle X^{**}(T),-G_2X^{**}(T)-\delta\lambda\rangle\right]+Z^{\top}(s)\mathbb{E}[\phi_2(s)]Z(s)+Z^{\top}(s)\mathbb{E}[\psi_2(s)]\lambda\\
=&\left\langle E_2u_2^*+\gamma\lambda,u_2^*\right\rangle_{\mathcal{L}^2_{\mathcal{F}}([s,T],\mathbb{R}^m)}+\left\langle D_2X^{**},X^{**}\right\rangle_{\mathcal{L}^2_{\mathcal{F}}([s,T],\mathbb{R}^n)}\\
&\mbox{}+\left\langle (\alpha-\phi_1B_1E_1^{-1}\beta)\lambda,X^{**}\right\rangle_{\mathcal{L}^2_{\mathcal{F}}([s,T],\mathbb{R}^n)}-\left\langle B_1E_1^{-1}\beta\lambda,\psi^*_1\right\rangle_{\mathcal{L}^2_{\mathcal{F}}([s,T],\mathbb{R}^n)}.
\end{align*}
And so  by the definition of $\widetilde{J}(\lambda,u_2)$ in Problem \ref{13}, we know that
$$
\widetilde{J}(\lambda,u^*_2(\lambda))=Z^{\top}(s)\mathbb{E}[\phi_2(s)]Z(s)+Z^{\top}(s)\mathbb{E}[\psi_2(s)]\lambda+\langle\rho(u_2^*(\lambda)),\lambda\rangle
-2\langle a,\lambda\rangle.
$$
Thus the result can be obtained from Proposition \ref{+2} immediately.
\end{proof}

We can now introduce the dual problem for the leader.
\begin{prob}\label{19}
Find $\lambda^* \in U_0$ such that
$$
\widetilde{J}(\lambda^*,u_2^*(\lambda^*))=\sup\limits_{\lambda \in U_0}\widetilde{J}(\lambda,u_2^*(\lambda)),
$$
where the Lagrangian dual function $\widetilde{J}(\lambda,u_2^*(\lambda))$ is given by \eqref{18}, and $$
U_0=\left\{\lambda\in \mathbb{R}^l\big|\lambda_i\geq0\;(i=1,2,\cdots,l')\right\}.$$
\end{prob}
\begin{remark}\label{50}
Clearly, $U_0$ is nonempty, closed and convex. If $U_0$ is in addition  bounded in $\mathbb{R}^l$, then Problem \ref{19} can be uniquely solvable because $\widetilde{J}(\lambda,u_2^*(\lambda))$ is continuous and quadratic with respect to $\lambda$. On the other hand, if  the positive definite condition $S_0\in{S}_{++}^l$ holds, then Problem \ref{19} can be also uniquely solvable.
\end{remark}
It is an important issue to investigate the well-know  strong duality between Problem \ref{19} and the problem stated by \eqref{+8} under the Slater condition. To this end, let
\begin{equation}\label{20}
\begin{cases}
d\overline{X}_i=\left[(A-S_1\phi_1)\overline{X}_i-S_1\overline{p}_i-B_1E_1^{-1}\beta_i\right]dt+C\overline{X}_idB(t), \quad t\in[s,T]\\
d\overline{p}_i=-\left[(A^{\top}-\phi_1 S_1)\overline{p}_i+C^{\top}\overline{q}_i+(\alpha_i-\phi_1B_1E_1^{-1}\beta_i)\right]dt+\overline{q}_idB(t), \quad t\in[s,T],\\
\overline{X}_i(0)=0^{n\times1},\quad \overline{p}_i(T)=-\delta_i.
\end{cases}
\end{equation}
By standard arguments, one can easily check that there exists a unique solution
$$(\overline{X}_i,\overline{p}_i,\overline{q}_i)\in \mathcal{L}^2_{\mathcal{F}}(\Omega,C([s,T],\mathbb{R}^{n}))\times\mathcal{L}^2_{\mathcal{F}}(\Omega,C([s,T],\mathbb{R}^{n}))\times\mathcal{L}^2_{\mathcal{F}}([s,T],\mathbb{R}^{n})
$$ to \eqref{20} such that
\begin{equation*}
\begin{cases}
\overline{X}_i(t)=-W^{-1}(t)\left[\int_s^tW(\varsigma)(S_1(\varsigma)\overline{p}_i(\varsigma)+B_1(\varsigma)E_1^{-1}(\varsigma)\beta_i(\varsigma))d\varsigma\right]dt, \quad t\in[s,T]\\
\overline{p}_i(t)=V^{-1}(t)\mathbb{E}\left[-V(T)\delta_i+\int_t^TV(\varsigma)(\alpha_i(\varsigma)-\phi_1(\varsigma)B_1(\varsigma)E_1^{-1}(\varsigma)\beta_i(\varsigma))\right]d\varsigma+\overline{q}_idB(t), \quad t\in[s,T],
\end{cases}
\end{equation*}
where $(W,V)\in \mathcal{L}^2_{\mathcal{F}}(\Omega,C([s,T],\mathbb{R}^{n\times n}))\times\mathcal{L}^2_{\mathcal{F}}(\Omega,C([s,T],\mathbb{R}^{n\times n}))$ is the unique invertible solution to the SDEs
\begin{equation*}
\begin{cases}
dW=W(C^2-A+S_1\phi_1)dt+WC^2ddB(t), \quad t\in[s,T]\\
dV=V(A^{\top}-\phi_1 S_1)dt+VC^{\top}dB(t), \quad t\in[s,T],\\
W(s)=V(s)=I_n.
\end{cases}
\end{equation*}
Applying the It\^{o} formula to $\langle \overline{X}_i,\psi_1\rangle-\langle \overline{q}_i,X^*\rangle$, one can check that
$$
\rho_i=\left\langle u_2,B_2^{\top}(\overline{p}_i-\phi_1\overline{X}_i)+\gamma_i\right\rangle_{\mathcal{L}^2_{\mathcal{F}}([s,T],\mathbb{R}^m)}-\left\langle\mathbb{E}[\overline{p}_i(0)],\xi\right\rangle.
$$
Thus \eqref{3} can be rewritten as
\begin{align*}
U_2^{ad}=&\Bigg\{u_2\in\mathcal{L}^2_{\mathcal{F}}([s,T],\mathbb{R}^m)\big|u_2\;\mbox{satisfies the following affine constraints:}\\
&\;\left\langle u_2,B_2^{\top}(\overline{p}_i-\phi_1\overline{X}_i)+\gamma_i\right\rangle_{\mathcal{L}^2_{\mathcal{F}}([s,T],\mathbb{R}^m)}\leq a_i+\left\langle\mathbb{E}[\overline{p}_i(0)],\xi\right\rangle,
\quad (i\in\mathcal{I}_1),\\
&\;\;\left\langle u_2,B_2^{\top}(\overline{p}_i-\phi_1\overline{X}_i)+\gamma_i\right\rangle_{\mathcal{L}^2_{\mathcal{F}}([s,T],\mathbb{R}^m)}= a_i+\left\langle\mathbb{E}[\overline{p}_i(0)],\xi\right\rangle,
\quad (i\in\mathcal{I}_2) \Bigg\}.
\end{align*}
We need the following Slater condition.

\noindent $(\mathbf{H4})$  There exists at least one $\overline{u}_2\in U^{ad}$ such that
\begin{equation}\label{24}
\begin{cases}
\left\langle \overline{u}_2,\widetilde{\rho}_i\right\rangle_{\mathcal{L}^2_{\mathcal{F}}([s,T],\mathbb{R}^m)}< \widetilde{a}_i, \;(i\in\mathcal{I}_1),\\
\left\langle \overline{u}_2,\widetilde{\rho}_i\right\rangle_{\mathcal{L}^2_{\mathcal{F}}([s,T],\mathbb{R}^m)}=\widetilde{a}_i, \;(i\in\mathcal{I}_2),
\end{cases}
\end{equation}
where
$$\widetilde{a}_i=a_i+\left\langle\mathbb{E}[\overline{p}_i(0)],\xi\right\rangle, \quad \widetilde{\rho}_i=B_2^{\top}(\overline{p}_i-\phi_1\overline{X}_i)+\gamma_i,$$
and $(\overline{X}_i,\overline{p}_i,\overline{q}_i)$ is the unique solution to \eqref{20}.

\begin{remark}\label{28}
Compared with existing literatures, since by constructing the FBSDE we rewrite the affine constrains as the ones depending only on the leader's strategy, the Slater condition \eqref{24} can be more easily verified. Furthermore, without loss of ambiguity, we require that the vector group $\{\widetilde{\rho}_i\}_{i=l'+1}^l$ of equality constraints in \eqref{24} is linearly independent, i.e., if there exists a vector $k=(k_{l'
+1},k_{l'+2},\cdots,k_l)^{\top}\in\mathbb{R}^{l-l'}$ such that $\sum \limits_{i=l'+1}^{l}k_i\widetilde{\rho}_i=0^{n\times 1}$ for all $t\in[s,T],\;\mathbb{P}$-a.s., then $k=0^{(l-l')\times 1}$.
\end{remark}

Then, the strong duality result between Problem \ref{19} and the leader's problem stated by \eqref{+8} can be specified as follows.
\begin{thm}\label{40}
Under $(\mathbf{H1})$-$(\mathbf{H4})$, Problems \ref{2} and \ref{19} are solvable. Moreover, one has
\begin{equation}\label{95}
J_2(\widehat{X},\widehat{u}_1,\widehat{u}_2)=\widetilde{J}(\lambda^*,u_2^*(\lambda^*)),
\end{equation}
where  $J_2(X,u_1,u_2)$ and $\widetilde{J}(\lambda,u_2^*(\lambda))$ are given by \eqref{8} and \eqref{18}, respectively.
\end{thm}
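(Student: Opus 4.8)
The plan is to read Theorem \ref{40} as the strong Lagrangian duality theorem for the convex program underlying the leader's problem, and to prove it in the classical three movements: weak duality, primal solvability, and closing of the duality gap via the Slater condition. The decisive preparatory fact is the reformulation carried out just before $(\mathbf{H4})$: by solving the adjoint FBSDE \eqref{20} and applying It\^{o}'s formula, each constraint functional has been rewritten as $\rho_i(u_2)=\langle u_2,\widetilde{\rho}_i\rangle_{\mathcal{L}^2_{\mathcal{F}}([s,T],\mathbb{R}^m)}-\langle\mathbb{E}[\overline{p}_i(0)],\xi\rangle$, which is affine in $u_2$ alone. Hence the primal problem \eqref{+8} is a genuine convex program in the single Hilbert-space variable $u_2\in\mathcal{L}^2_{\mathcal{F}}([s,T],\mathbb{R}^m)$: minimize the quadratic functional $u_2\mapsto J_2(X^*,u_1^*(u_2),u_2)$ over the closed convex set $U_2^{ad}$ cut out by finitely many affine (in)equalities, with Lagrangian $\widetilde{J}(\lambda,u_2)$ and dual function $\widetilde{J}(\lambda,u_2^*(\lambda))$ already supplied by Theorem \ref{38}.

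First I would establish weak duality. Fix $\lambda\in U_0$ and any feasible $u_2\in U_2^{ad}$. Since $u_2^*(\lambda)$ minimizes $\widetilde{J}(\lambda,\cdot)$ by Theorem \ref{38},
\[
\widetilde{J}(\lambda,u_2^*(\lambda))\leq\widetilde{J}(\lambda,u_2)=J_2(X^*,u_1^*(u_2),u_2)+2\langle\rho(u_2)-a,\lambda\rangle.
\]
Feasibility gives $\rho_i(u_2)-a_i\leq 0$ for $i\in\mathcal{I}_1$ and $\rho_i(u_2)-a_i=0$ for $i\in\mathcal{I}_2$; combined with $\lambda_i\geq 0$ for $i\in\mathcal{I}_1$ this forces $\langle\rho(u_2)-a,\lambda\rangle\leq 0$, whence $\widetilde{J}(\lambda,u_2^*(\lambda))\leq J_2(X^*,u_1^*(u_2),u_2)$. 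Taking the supremum over $\lambda$ and the infimum over feasible $u_2$ yields $\sup_{\lambda\in U_0}\widetilde{J}(\lambda,u_2^*(\lambda))\leq\inf_{u_2\in U_2^{ad}}J_2$. For primal solvability, $(\mathbf{H4})$ makes $U_2^{ad}$ nonempty, while $(\mathbf{H2})$ makes $u_2\mapsto J_2(X^*,u_1^*(u_2),u_2)$ strongly convex (hence coercive) and weakly lower semicontinuous; minimizing such a functional over a nonempty closed convex subset of a Hilbert space produces a unique minimizer $\widehat{u}_2$, so Problem \ref{2} is solvable.

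The heart of the matter is closing the duality gap, and this is where $(\mathbf{H4})$ is indispensable. My plan is to produce a Karush--Kuhn--Tucker multiplier $\lambda^*\in U_0$ at $\widehat{u}_2$, namely a vector for which $\widehat{u}_2$ minimizes the Lagrangian $\widetilde{J}(\lambda^*,\cdot)$ (so that $\widehat{u}_2=u_2^*(\lambda^*)$ by Theorem \ref{38}), the complementary slackness $\lambda_i^*(\rho_i(\widehat{u}_2)-a_i)=0$ holds for $i\in\mathcal{I}_1$, and $\widehat{u}_2$ is feasible. Establishing the existence of such a $\lambda^*$ is the main obstacle. I would obtain it by a Hahn--Banach separation argument in the finite-dimensional value space of objective/constraint pairs: the attainable set is convex because $J_2$ is convex in $u_2$ while each $\rho_i$ is affine, and the strict inequalities in the Slater condition \eqref{24} place a feasible point in the interior of the inequality cone, which forces the separating functional to have a strictly positive objective coordinate. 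Normalizing that coordinate to one, the remaining coordinates furnish $\lambda^*\in U_0$, with the linear independence of $\{\widetilde{\rho}_i\}_{i=l'+1}^{l}$ required in Remark \ref{28} guaranteeing that the equality-constraint multipliers are well defined. Granting $\lambda^*$, the conclusion is immediate:
\[
\widetilde{J}(\lambda^*,u_2^*(\lambda^*))=\widetilde{J}(\lambda^*,\widehat{u}_2)=J_2(\widehat{X},\widehat{u}_1,\widehat{u}_2)+2\langle\rho(\widehat{u}_2)-a,\lambda^*\rangle=J_2(\widehat{X},\widehat{u}_1,\widehat{u}_2),
\]
where the last equality uses complementary slackness (the $\mathcal{I}_1$ terms vanish) together with feasibility (the $\mathcal{I}_2$ terms vanish). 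Hence the dual value is at least $J_2(\widehat{X},\widehat{u}_1,\widehat{u}_2)$, which by weak duality coincides with the primal optimum; therefore $\lambda^*$ solves Problem \ref{19}, Problem \ref{19} is solvable, and \eqref{95} holds.
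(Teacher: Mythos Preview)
Your proposal is correct and follows essentially the same route as the paper: both reduce the leader's problem to a convex program in $u_2$ via the affine reformulation of $\rho_i$, establish weak duality, and then close the gap by a separation argument in the finite-dimensional value space $\mathbb{R}\times\mathbb{R}^l$ (the paper separates the attainable set $U_1$ from the ray $U_2=\{(\chi,0):\chi<0\}$, uses $(\mathbf{H4})$ together with the linear independence in Remark \ref{28} to force the objective coordinate of the separating functional to be strictly positive, and normalizes to obtain the multiplier). The only organizational difference is that you package the output of the separation as a KKT multiplier and invoke complementary slackness directly, whereas the paper sandwiches $J_2(\widehat{X},\widehat{u}_1,\widehat{u}_2)$ between $\widetilde{J}(\overline{\lambda},u_2^*(\overline{\lambda}))$ and the weak-duality bound; you also address primal solvability explicitly via the coercivity from $(\mathbf{H2})$, which the paper leaves implicit.
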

\begin{proof}
Consider the following two sets
\begin{align*}
U_1=\Big\{&(\chi,\mu)\in\mathbb{R}\times \mathbb{R}^l|\exists u\in\mathcal{L}^2_{\mathcal{F}}([s,T],\mathbb{R}^m),\;s.t.\;\left\langle u,\widetilde{\rho}_i\right\rangle_{\mathcal{L}^2_{\mathcal{F}}([s,T],\mathbb{R}^m)}-\widetilde{a}_i\leq\mu_i\; (i\in\mathcal{I}_1),\\
&\left\langle u,\widetilde{\rho}_i\right\rangle_{\mathcal{L}^2_{\mathcal{F}}([s,T],\mathbb{R}^m)}-\widetilde{a}_i=\mu_i (i\in\mathcal{I}_2),\;\mbox{and}\;J_2(X,u_1,u_2)-J_2(\widehat{X},\widehat{u}_1,\widehat{u}_2)\leq\chi\Big\},\\
U_2=\Big\{&(\chi,0^{l\times 1})\in\mathbb{R}\times \mathbb{R}^l,\,\chi<0\Big\}
\end{align*}
with $U_1\cap U_2=\emptyset$. Under $(\mathbf{H4})$,  both $U_1$ and $U_2$ are convex. Then, by the separation theorem of convex sets, there exists $(\overline{\chi},\overline{\mu})\in\mathbb{R}\times \mathbb{R}^l$ such that
\begin{equation}\label{3+1}
\inf\limits_{(\chi,\mu)\in U_1}[\overline{\chi}\chi+\langle \overline{\mu},\mu\rangle]\geq \sup\limits_{\chi<0}\overline{\chi}\chi,
\end{equation}
where $\overline{\mu}=(\overline{\mu}_1,\overline{\mu}_2,\cdots,\overline{\mu}_l)^{\top}$.  If $\overline{\chi}<0$,
by choosing $\chi$ large enough on the left side in \eqref{3+1}, one has
$$\inf\limits_{(\chi,\mu)\in U_1}[\overline{\chi}\chi+\langle \overline{\mu},\mu\rangle]<0, \quad
\sup\limits_{\chi<0}\overline{\chi}\chi\ge 0,
$$
which is a contradiction and so $\overline{\chi}\geq0$. Similarly, one has $\overline{\mu}_i\geq0\; (i=1,2,\cdots,l')$. Thus,
\begin{equation}\label{96}
\inf\limits_{(\chi,\mu)\in U_1}[\overline{\chi}\chi+\langle \overline{\mu},\mu\rangle]\geq0.
\end{equation}

We now claim that $\overline{\chi}>0$. In fact, if $\overline{\chi}=0$, i.e.,
\begin{equation*}
\inf\limits_{(\chi,\mu)\in U_1}[\langle \overline{\mu},\mu\rangle]=
\inf\limits_{(\chi,\mu)\in U_1}\left[\sum\limits_{i=1}^{l'}\overline{\mu}_i\mu_i+\sum\limits_{i=l'+1}^{l}\overline{\mu}_i\mu_i\right]\geq0,
\end{equation*}
then $(\mathbf{H4})$  shows that there exists  $\overline{u}\in\mathcal{L}^2_{\mathcal{F}}([s,T],\mathbb{R}^m)$ satisfying \eqref{24}. Therefore one can choose $\mu_i<0\; (i=1,2,\cdots,l')$ and $\mu_i=0\; (i=l'+1,l'+2,\cdots,l)$ in $U_1$, which leads  to $\overline{\mu}_i=0\; (i=1,2,\cdots,l')$ and
\begin{equation}\label{97}
\inf\limits_{(\chi,\mu)\in U_1}\left[\sum\limits_{i=1}^{l'}\overline{\mu}_i\mu_i+\sum\limits_{i=l'+1}^{l}\overline{\mu}_i\mu_i\right]
=\inf\limits_{(\chi,\mu)\in U_1}\sum\limits_{i=l'+1}^{l}\overline{\mu}_i\mu_i\geq0.
\end{equation}
By Remark \ref{28}, we have
$$\widetilde{\rho}=\sum\limits_{i=l'+1}^{l}\overline{\mu}_i\widetilde{\rho}_i\neq0, \quad \left\langle \overline{u},\widetilde{\rho}\right\rangle_{\mathcal{L}^2_{\mathcal{F}}([s,T],\mathbb{R}^m)}
=\sum\limits_{i=l'+1}^{l}\overline{\mu}_i\widetilde{a}_i.$$
Choosing $u_0=\overline{u}-\varepsilon\widetilde{\rho}$ for $\varepsilon>0$ small enough derives
$$
\left\langle u_0,\widetilde{\rho}\right\rangle_{\mathcal{L}^2_{\mathcal{F}}([s,T],\mathbb{R}^m)}
-\sum\limits_{i=l'+1}^{l}\overline{\mu}_i\widetilde{a}_i=
-\varepsilon\|\widetilde{\rho}\|_{\mathcal{L}^2_{\mathcal{F}}([s,T],\mathbb{R}^m)}<0,
$$
which contradicts \eqref{97}. Thus $\overline{\chi}>0$ and $\sup\limits_{\chi<0}\overline{\chi}\chi=0$.

Next, letting $\overline{\lambda}=\frac{\overline{\mu}}{\overline{\chi}}=(\overline{\lambda}_1,\overline{\lambda}_2,\cdots,\overline{\lambda}_l)^{\top}\in U_0$
and combining \eqref{96}, we obtain
\begin{align*}
0&\leq\inf\limits_{(\chi,\mu)\in U_1}[\kappa+\langle \overline{\lambda},\mu\rangle]\\
&\leq\inf_{u_2\in\mathcal{L}^2_{\mathcal{F}}([s,T],\mathbb{R}^m)}\left[J_2(X,u_1,u_2)-J_2(\widehat{X},\widehat{u}_1,\widehat{u}_2))
+\sum\limits_{i=1}^{l}\overline{\lambda}_i\left[\left\langle u_2,\widetilde{\rho}_i\right\rangle_{\mathcal{L}^2_{\mathcal{F}}([s,T],\mathbb{R}^m)}
-\widetilde{a}_i\right]\right]\\
&=\widetilde{J}(\overline{\lambda},u_2^*(\overline{\lambda}))-J_2(\widehat{X},\widehat{u}_1,\widehat{u}_2),
\end{align*}
and so
\begin{equation}\label{30}
J_2(\widehat{X},\widehat{u}_1,\widehat{u}_2)\leq\widetilde{J}(\overline{\lambda},u_2^*(\overline{\lambda}))\leq\sup\limits_{\lambda\in U_0}\widetilde{J}(\lambda,u^*_2)=\sup\limits_{\lambda\in U_0}\inf\limits_{u_2\in\mathcal{L}^2_{\mathcal{F}}([s,T],\mathbb{R}^m)}\widetilde{J}(\lambda,u_2)=\widetilde{J}(\lambda^*,u_2^*(\lambda^*)).
\end{equation}
On the other hand, since $\widehat{u}_2\in U_2^{ad}$, we have
\begin{equation}\label{31}
\sup\limits_{\lambda\in U_0}\inf\limits_{u_2\in\mathcal{L}^2_{\mathcal{F}}([s,T],\mathbb{R}^m)}\widetilde{J}(\lambda,u_2)
\leq\inf\limits_{u_2\in\mathcal{L}^2_{\mathcal{F}}([s,T],\mathbb{R}^m)}\sup\limits_{\lambda\in U_0}\widetilde{J}(\lambda,u_2)
\leq \sup\limits_{\lambda\in U_0}\widetilde{J}(\lambda,\widehat{u}_2)\leq J_2(\widehat{X},\widehat{u}_1,\widehat{u}_2).
\end{equation}
Therefore, by \eqref{30} and \eqref{31}, we see that $$J_2(\widehat{X},\widehat{u}_1,\widehat{u}_2)=\widetilde{J}(\lambda^*,u_2^*(\lambda^*))
=\widetilde{J}(\overline{\lambda},u_2^*(\overline{\lambda}))$$ and so
$\overline{\lambda}$ is a solution to Problem \ref{19}, which ends the proof.
\end{proof}
Next, according to Theorems \ref{38} and \ref{40}, Proposition \ref{39} and Remark \ref{50},   we are able to give the following KKT condition for the optimal feedback strategy of the leader by standard arguments (see \cite{bonnans2013perturbation, guo2013mathematical, jane2005necessary}  and references therein).
 \begin{thm}\label{43}
Under $(\mathbf{H1})$-$(\mathbf{H4})$, $\widehat{u}_2$ is an optimal feedback strategy of the leader if the following KKT condition holds:
\begin{equation}\label{45}
\begin{cases}
\widehat{u}_2=\left(\begin{bmatrix}
0^{m\times n}&
E_2^{-1}B_2^{\top}\phi_1
\end{bmatrix}-\begin{bmatrix}
E_2^{-1}B_2^{\top}&
0^{m\times n}
\end{bmatrix}\phi_2\right)\widehat{Z}-\left(\begin{bmatrix}
E_2^{-1}B_2^{\top}&
0^{m\times n}
\end{bmatrix}\psi_2+E_2^{-1}\gamma\right)\lambda^*,\\
\lambda^*_i\geq 0\;(i\in\mathcal{I}_1),\quad
\rho_i(\lambda^*)- a_i\leq0\;(i\in\mathcal{I}_1),\\
\lambda^*_i\left(\rho_i(\lambda^*)- a_i\right)=0\;(i\in\mathcal{I}_1),\quad
\rho_i(\lambda^*)= a_i\;(i\in\mathcal{I}_2),
\end{cases}
\end{equation}
where $\rho(u_2^*(\lambda^*))=(\rho_1(\lambda^*),\rho_2(\lambda^*),\cdots,\rho_l(\lambda^*))$ is given in Proposition \ref{+2}, $(\widehat{Z},\widehat{P},\widehat{Q})$ is the unique solution to \eqref{12}, and $(\psi_2,\widetilde{\psi}_2)$ is the unique solution to \eqref{+6}. Moreover, if $S_0\in S_{++}^l$, then $\lambda^*$ given by \eqref{45} is the unique solution to Problem \ref{19}.
\end{thm}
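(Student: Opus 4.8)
The plan is to recognize Problem \ref{19} as the maximization of a concave quadratic function over the polyhedral set $U_0$, for which the relations in \eqref{45} are exactly the first-order optimality conditions, and then to transfer optimality back to the leader's problem \eqref{+8} via the strong duality of Theorem \ref{40}. First I would record the shape of the dual objective. By Proposition \ref{39},
\[
\widetilde{J}(\lambda,u_2^*(\lambda))=Z^{\top}(s)\mathbb{E}[\phi_2(s)]Z(s)+2\langle\mathbb{E}[\psi_2^{\top}(s)Z(s)]-a,\lambda\rangle-\langle S_0\lambda,\lambda\rangle,
\]
a quadratic in $\lambda$ with Hessian $-2S_0$. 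Since for each fixed $u_2$ the map $\lambda\mapsto\widetilde{J}(\lambda,u_2)$ is affine, the dual function $\lambda\mapsto\widetilde{J}(\lambda,u_2^*(\lambda))=\inf_{u_2}\widetilde{J}(\lambda,u_2)$ is an infimum of affine functions and is therefore concave; in particular $S_0$ is positive semidefinite. Matching the affine-in-$\lambda$ expression for $\rho(u_2^*(\lambda))$ from Proposition \ref{+2} against the definition \eqref{+12} of $S_0$ yields $\rho(u_2^*(\lambda))=\mathbb{E}[\psi_2^{\top}(s)]Z(s)-S_0\lambda$, so that
\[
\nabla_\lambda\widetilde{J}(\lambda,u_2^*(\lambda))=2\bigl(\mathbb{E}[\psi_2^{\top}(s)]Z(s)-a-S_0\lambda\bigr)=2\bigl(\rho(u_2^*(\lambda))-a\bigr).
\]

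Next I would write the variational characterization of a maximizer of the concave function $\widetilde{J}(\cdot,u_2^*(\cdot))$ over $U_0=\{\lambda:\lambda_i\ge 0,\ i\in\mathcal{I}_1\}$: at an optimal $\lambda^*$ one needs $\langle\nabla_\lambda\widetilde{J}(\lambda^*,u_2^*(\lambda^*)),\lambda-\lambda^*\rangle\le 0$ for all $\lambda\in U_0$. Because $U_0$ imposes no restriction on the coordinates $i\in\mathcal{I}_2$ and the sign constraint $\lambda_i\ge 0$ on $i\in\mathcal{I}_1$, this inequality decouples coordinatewise into $\partial_i\widetilde{J}(\lambda^*)=0$ for $i\in\mathcal{I}_2$, and $\partial_i\widetilde{J}(\lambda^*)\le 0$ together with $\lambda_i^*\partial_i\widetilde{J}(\lambda^*)=0$ for $i\in\mathcal{I}_1$. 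By the gradient identity above these are precisely the equality relation $\rho_i(\lambda^*)=a_i$, the inequality $\rho_i(\lambda^*)-a_i\le 0$, and the complementary slackness $\lambda_i^*(\rho_i(\lambda^*)-a_i)=0$ of \eqref{45}, while the first line of \eqref{45} is the feedback form of $u_2^*(\lambda^*)$ furnished by Theorem \ref{38}. Since $\widetilde{J}(\cdot,u_2^*(\cdot))$ is concave, these first-order conditions are sufficient, so any $\lambda^*$ satisfying \eqref{45} solves the dual Problem \ref{19}.

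Finally I would transfer optimality to the leader. The feasibility relations in \eqref{45} state exactly that $\widehat{u}_2=u_2^*(\lambda^*)$ fulfils the constraints defining $U_2^{ad}$, so $\widehat{u}_2$ is admissible; the complementary slackness and equality relations give $\langle\rho(u_2^*(\lambda^*))-a,\lambda^*\rangle=0$, whence $J_2(\widehat{X},\widehat{u}_1,\widehat{u}_2)=\widetilde{J}(\lambda^*,u_2^*(\lambda^*))$. Strong duality (Theorem \ref{40}, equation \eqref{95}) identifies this common value with the optimal value of \eqref{+8}, so $\widehat{u}_2$ attains the infimum and is optimal. For the uniqueness claim I would invoke Remark \ref{50}: when $S_0\in S_{++}^l$ the Hessian $-2S_0$ is negative definite, so $\widetilde{J}(\cdot,u_2^*(\cdot))$ is strictly concave on the convex set $U_0$ and therefore admits a unique maximizer, i.e. $\lambda^*$ is the unique solution of Problem \ref{19}.

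The step I expect to be the main (if mild) obstacle is the gradient identity $\nabla_\lambda\widetilde{J}=2(\rho(u_2^*(\lambda))-a)$: it is what converts the abstract stationarity conditions for the dual into the concrete, constraint-based KKT relations of \eqref{45}, and it rests on carefully reconciling the quadratic coefficient appearing in Proposition \ref{39} with the affine-in-$\lambda$ representation of $\rho$ in Proposition \ref{+2} through the definition \eqref{+12} of $S_0$. Once this identification is in place, everything else is the standard sufficiency of first-order conditions for a concave program together with the already-established strong duality.
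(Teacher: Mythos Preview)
Your proposal is correct and is exactly the ``standard argument'' the paper gestures at: the paper gives no detailed proof of this theorem, merely stating that it follows from Theorems~\ref{38} and~\ref{40}, Proposition~\ref{39}, and Remark~\ref{50} by standard KKT arguments (citing optimization references). Your key step---the gradient identity $\nabla_\lambda\widetilde{J}(\lambda,u_2^*(\lambda))=2(\rho(u_2^*(\lambda))-a)$ obtained by matching Proposition~\ref{+2} with Proposition~\ref{39} via the definition of $S_0$---is precisely what converts the abstract first-order conditions for the concave dual into the concrete feasibility/complementary-slackness relations of \eqref{45}, and the remainder (sufficiency by concavity, transfer via weak/strong duality and complementary slackness, uniqueness under $S_0\in\mathcal{S}_{++}^l$) is the routine argument the paper leaves to the cited references.
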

Finally, combining Theorems \ref{42} and \ref{43}, we obtain the following main results for Problem \ref{2}.
 \begin{thm}\label{44}
Under $(\mathbf{H1})$-$(\mathbf{H4})$, $(\widehat{u}_1,\widehat{u}_2)$ is a  feedback Stackelberg equilibrium of Problem \ref{2} if the following holds:
\begin{equation}\label{35}
\begin{cases}
\widehat{u}_1=\left(\begin{bmatrix}
0^{m\times n}&
E_1^{-1}B_1^{\top}
\end{bmatrix}\phi_2-\begin{bmatrix}
E_1^{-1}B_1^{\top}\phi_1&
0^{m\times n}
\end{bmatrix}\right)\widehat{Z}-\begin{bmatrix}
0^{m\times n}&
E_1^{-1}B_1^{\top}
\end{bmatrix}\psi_2\lambda^*,\\
\widehat{u}_2=\left(\begin{bmatrix}
0^{m\times n}&
E_2^{-1}B_2^{\top}\phi_1
\end{bmatrix}-\begin{bmatrix}
E_2^{-1}B_2^{\top}&
0^{m\times n}
\end{bmatrix}\phi_2\right)\widehat{Z}-\left(\begin{bmatrix}
E_2^{-1}B_2^{\top}&
0^{m\times n}
\end{bmatrix}\psi_2+E_2^{-1}\gamma\right)\lambda^*,\\
\lambda^*_i\geq 0\;(i\in\mathcal{I}_1),\quad
\rho_i(\lambda^*)- a_i\leq0\;(i\in\mathcal{I}_1),\\
\lambda^*_i\left(\rho_i(\lambda^*)- a_i\right)=0\;(i\in\mathcal{I}_1),\quad
\rho_i(\lambda^*)= a_i\;(i\in\mathcal{I}_2),
\end{cases}
\end{equation}
where  $(\widehat{Z},\widehat{P},\widehat{Q})$  and $(\psi_2,\widetilde{\psi}_2)$ are the unique solutions to \eqref{12} and \eqref{+6}, respectively.
\end{thm}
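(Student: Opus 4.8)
The plan is to assemble the two halves of the Stackelberg equilibrium that have already been constructed: the leader's optimal feedback strategy obtained in Theorem \ref{43}, and the follower's optimal feedback response obtained in Theorem \ref{42}. By the definition of the feedback Stackelberg equilibrium given in the remark after Problem \ref{2}, it suffices to show two things: (i) that the $\widehat{u}_2$ in \eqref{35} is indeed the leader's optimal feedback strategy, and (ii) that the $\widehat{u}_1$ in \eqref{35} is the follower's best response $u_1^*(\widehat{u}_2)$ written in feedback form. Step (i) is immediate, so the genuine content lies in step (ii).

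For (i), I would simply observe that the expression for $\widehat{u}_2$ in \eqref{35}, together with the sign and complementary-slackness conditions on $\lambda^*$, coincides verbatim with the KKT condition \eqref{45}. Hence Theorem \ref{43} applies directly: under $(\mathbf{H1})$-$(\mathbf{H4})$, $\widehat{u}_2$ solves the leader's problem \eqref{+8} and $\lambda^*$ solves the dual Problem \ref{19}, with the associated augmented state-adjoint triple $(\widehat{Z},\widehat{P},\widehat{Q})$ solving \eqref{12} and $(\psi_2,\widetilde{\psi}_2)$ solving \eqref{+6}.

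For (ii), the key point is that once the leader commits to $\widehat{u}_2 = u_2^*(\lambda^*)$, the follower faces exactly the optimal control problem solved in Theorem \ref{42}, so his best response is $\widehat{u}_1 = -E_1^{-1}B_1^\top(\phi_1\widehat{X}+\psi_1)$ as in \eqref{10}, where $\widehat{X}=X^*(\widehat{u}_2)$ and $\psi_1$ solves the BSDE \eqref{4} with driver $\phi_1 B_2 \widehat{u}_2$. The remaining work is to rewrite $\widehat{X}$ and $\psi_1$ through the augmented variables. I would identify $\widehat{X}$ with the top block $X^{**}$ of $\widehat{Z}$ and $\psi_1$ with the lower block $\psi_1^*$ of $\widehat{P}$ in the coupled FBSDE \eqref{+4}; the identification $\psi_1=\psi_1^*$ follows from the uniqueness of solutions to \eqref{4}, since that BSDE and the fourth line of \eqref{+4} share the same driver once $u_2=\widehat{u}_2$. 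Then, invoking the four-step relation $\widehat{P}=-\phi_2\widehat{Z}-\psi_2\lambda^*$ from the proof of Theorem \ref{38}, I would extract $\psi_1^*$ as the lower block of $-\phi_2\widehat{Z}-\psi_2\lambda^*$, substitute the block selections of $X^{**}$ and $\psi_1^*$ into \eqref{10}, and collect terms to recover the block-matrix feedback expression for $\widehat{u}_1$ in \eqref{35}.

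The main obstacle I anticipate is the block-matrix bookkeeping in step (ii): one must carefully track which $n$-dimensional block of the $2n$-dimensional augmented variables $\widehat{Z}$ and $\widehat{P}$ corresponds to the follower's state and adjoint, and confirm that the follower's adjoint $\psi_1$ from \eqref{4} genuinely coincides with the embedded lower block $\psi_1^*$ of the leader's coupled system \eqref{+4}. This matching — rather than any new estimate — is the crux, and it rests entirely on the uniqueness statements already established in Theorems \ref{42} and \ref{38}. Once it is in place, the final feedback formula for $\widehat{u}_1$ is obtained by routine, though sign-sensitive, algebra, and the combination with $\widehat{u}_2$ and the KKT conditions yields the claimed feedback Stackelberg equilibrium.
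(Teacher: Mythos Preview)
Your proposal is correct and follows exactly the paper's approach: the paper presents Theorem \ref{44} as an immediate combination of Theorems \ref{42} and \ref{43}, without further argument, and your plan fills in precisely the block-matrix bookkeeping (identifying $\widehat{X}=X^{**}$ and $\psi_1=\psi_1^*$ via uniqueness, then extracting the lower block of $\widehat{P}=-\phi_2\widehat{Z}-\psi_2\lambda^*$) that the paper leaves implicit. There is nothing to add.
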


\begin{remark}
It is worth  mentioning that the feedback Stackelberg equilibrium $(\widehat{u}_1,\widehat{u}_2)$ given by \eqref{35} may be nonunique because that the indefiniteness of the coefficients of the cost functionals makes the Pontryagin maximum principle unapplicable. One may learn from the method of separation of variables in \cite{sun2021indefinite} to obtain the uniqueness of $(\widehat{u}_1,\widehat{u}_2)$.
\end{remark}

\section{Examples}
In this section, we provide two  examples to illustrate our main results.
Consider the controlled systems
\begin{equation}\label{60}
\begin{cases}
dX(t)=\left[X(t)+\frac{1}{2}u_1+\frac{1}{2}u_2\right]dt+X(t)dB(t), \quad t\in[0,1],\\
X(0)=\xi=1,
\end{cases}
\end{equation}
and the cost functionals with indefinite coefficients
\begin{equation}\label{61}
\begin{cases}
J_1(X,u_1,u_2)=\mathbb{E}\left[{\int_0^1}4u_1^2(t)-2X^2(t)dt+X^2(T)\right],\\
J_2(X,u_1,u_2)=\mathbb{E}\left[{\int_0^1}2x^2(t)+4u_2^2(t)dt-X^2(T)\right]
\end{cases}
\end{equation}
for the follower and the leader, respectively. Then, applying the It\^{o} formula to $\widetilde{X}^2$, $|\widetilde{X}^*|^2$ and $\overline{\psi}_1\widetilde{X}^*$ (see \eqref{7} and \eqref{11}), one has
\begin{equation*}
\begin{cases}
\|\widetilde{X}_T\|^2_{\mathcal{L}^2_{\mathcal{F}_{T}}(\Omega,\mathbb{R})}=3\|\widetilde{X}\|^2_{\mathcal{L}^2_{\mathcal{F}}([s,T],\mathbb{R})}+\langle \widetilde{X},\widetilde{u}_1\rangle_{\mathcal{L}^2_{\mathcal{F}}([s,T],\mathbb{R})},\\
0=\langle -\overline{\psi}_1+\frac{1}{2}\widetilde{u}_2,\overline{\psi}_1\rangle_{\mathcal{L}^2_{\mathcal{F}}([s,T],\mathbb{R})}+\langle \frac{1}{2}\widetilde{u}_2,\widetilde{X}^*\rangle_{\mathcal{L}^2_{\mathcal{F}}([s,T],\mathbb{R})},\\
\|\widetilde{X}^*_T\|^2_{\mathcal{L}^2_{\mathcal{F}_{T}}(\Omega,\mathbb{R})}=2\langle\widetilde{X}^*,-\overline{\psi}_1+\frac{1}{2}\widetilde{u}_2\rangle_{\mathcal{L}^2_{\mathcal{F}}([s,T],\mathbb{R})}+
\|\widetilde{X}^*\|^2_{\mathcal{L}^2_{\mathcal{F}}([s,T],\mathbb{R})}.
\end{cases}
\end{equation*}
Then, we have
\begin{align*}
\mathbb{E}\left[{\int_0^1}4|\widetilde{u}_1(t)|^2-2|\widetilde{X}(t)|^2 dt+|\widetilde{X}(T)|^2\right]
=&4\|\widetilde{u}_1\|^2_{\mathcal{L}^2_{\mathcal{F}}([s,T],\mathbb{R})}+\|\widetilde{X}\|^2_{\mathcal{L}^2_{\mathcal{F}}([s,T],\mathbb{R})}+\langle \widetilde{X},\widetilde{u}_1\rangle_{\mathcal{L}^2_{\mathcal{F}}([s,T],\mathbb{R})}\\
\geq&\frac{15}{4}\|\widetilde{u}_1\|^2_{\mathcal{L}^2_{\mathcal{F}}([s,T],\mathbb{R})}
\end{align*}
and
\begin{align*}
&\mathbb{E}\left[{\int_0^1}4|\widetilde{u}_2(t)|^2+2|\widetilde{X}^*(t)|^2dt-|\widetilde{X}^*(T)|^2\right]\\
=&4\|\widetilde{u}_2\|^2_{\mathcal{L}^2_{\mathcal{F}}([s,T],\mathbb{R})}-2\langle\widetilde{X}^*,-\overline{\psi}_1+\frac{1}{2}\widetilde{u}_2\rangle_{\mathcal{L}^2_{\mathcal{F}}([s,T],\mathbb{R})}+
\|\widetilde{X}^*\|^2_{\mathcal{L}^2_{\mathcal{F}}([s,T],\mathbb{R})}\\
=&4\|\widetilde{u}_2\|^2_{\mathcal{L}^2_{\mathcal{F}}([s,T],\mathbb{R})}+2\langle\widetilde{X}^*,\overline{\psi}_1\rangle_{\mathcal{L}^2_{\mathcal{F}}([s,T],\mathbb{R})}+
\|\widetilde{X}^*\|^2_{\mathcal{L}^2_{\mathcal{F}}([s,T],\mathbb{R})}\\
&\mbox{}+2\|\overline{\psi}_1\|^2_{\mathcal{L}^2_{\mathcal{F}}([s,T],\mathbb{R})}-2\langle\widetilde{X}^*,\widetilde{u}_2\rangle_{\mathcal{L}^2_{\mathcal{F}}([s,T],\mathbb{R})}
-\langle\widetilde{u}_2,\overline{\psi}_1\rangle_{\mathcal{L}^2_{\mathcal{F}}([s,T],\mathbb{R})}\\
=&\|\widetilde{X}^*+\overline{\psi}_1-\widetilde{u}_2\|^2_{\mathcal{L}^2_{\mathcal{F}}([s,T],\mathbb{R})}+\|\overline{\psi}_1+\frac{1}{2}\widetilde{u}_2\|^2_{\mathcal{L}^2_{\mathcal{F}}([s,T],\mathbb{R})}
+\frac{9}{4}\|\widetilde{u}_2\|^2_{\mathcal{L}^2_{\mathcal{F}}([s,T],\mathbb{R})}\\
\geq&\frac{9}{4}\|\widetilde{u}_2\|^2_{\mathcal{L}^2_{\mathcal{F}}([s,T],\mathbb{R})}.
\end{align*}
Thus conditions $(\mathbf{H1})$-$(\mathbf{H2})$ can be satisfied. Moreover, one can check  that $(\phi_1,\widetilde{\phi}_1)=(1,0)$ and
$$(\phi_2,\widetilde{\phi}_2)=\left(\begin{bmatrix}
-1&0\\
0&1-e^{-t}
\end{bmatrix},
\begin{bmatrix}
0&0\\
0&0
\end{bmatrix}\right)$$
are unique solutions to \eqref{9} and \eqref{+5}, respectively, and so $(\mathbf{H3})$ can also be satisfied.

We now focus on the following two cases for the equality constraint and the inequality constraint, respectively.
\begin{example}\label{+11}
For the Stackelberg game described by  \eqref{60} and \eqref{61}, we further consider the following equality constraint:
$$
\mathbb{E}\left[\int_0^1(1-e^{-t})X^*(t)+2u_1^*(t)+2e^{-t}u_2(t)dt\right]=a,
$$
where $a$ is a parameter. Obviously, the constraint meets the condition $(\mathbf{H4})$. By \eqref{+6} and \eqref{18}, one has $(\psi_2,\widetilde{\psi}_2)=(0^{2\times1},0^{2\times1})$, $S_0=\frac{1-e^{-2}}{2}>0$  and
$$\widetilde{J}(\lambda,u_2^*(\lambda))=-1-2a\lambda-\frac{1-e^{-2}}{2}\lambda^2.$$
Thus according to Theorem \ref{44},  the   feedback Stackelberg equilibrium $(\widehat{u}_1,\widehat{u}_2)$ is given by
\begin{equation*}
\begin{cases}
\widehat{u}_1=\frac{1}{2}\begin{bmatrix}
1&1-e^{-t}
\end{bmatrix}\widehat{Z},\\
\widehat{u}_2=\frac{1}{2}\begin{bmatrix}
1&1
\end{bmatrix}\widehat{Z}-\frac{e^{-t}}{2}\lambda^*
\end{cases}
\end{equation*}
with $\lambda^*=\frac{2a}{e^{-2}-1}$, where the state $\widehat{Z}$ satisfies the SDE
\begin{equation*}
\begin{cases}
d\widehat{Z}=\left[\begin{bmatrix}
-1&2-e^{-t}\\
-1&0
\end{bmatrix}\widehat{Z}+
\begin{bmatrix}
\frac{2a e^{-t}}{1-e^{-2}}\\
\frac{2a}{e^{-2}-1}
\end{bmatrix}\right]dt+\widehat{Z}dB_t, \quad t\in[0,1]\\
\widehat{Z}(0)=
\begin{bmatrix}
1\\
0
\end{bmatrix}.
\end{cases}
\end{equation*}

\begin{figure}[htbp]
\centering
\includegraphics[height=7cm, width=10cm]{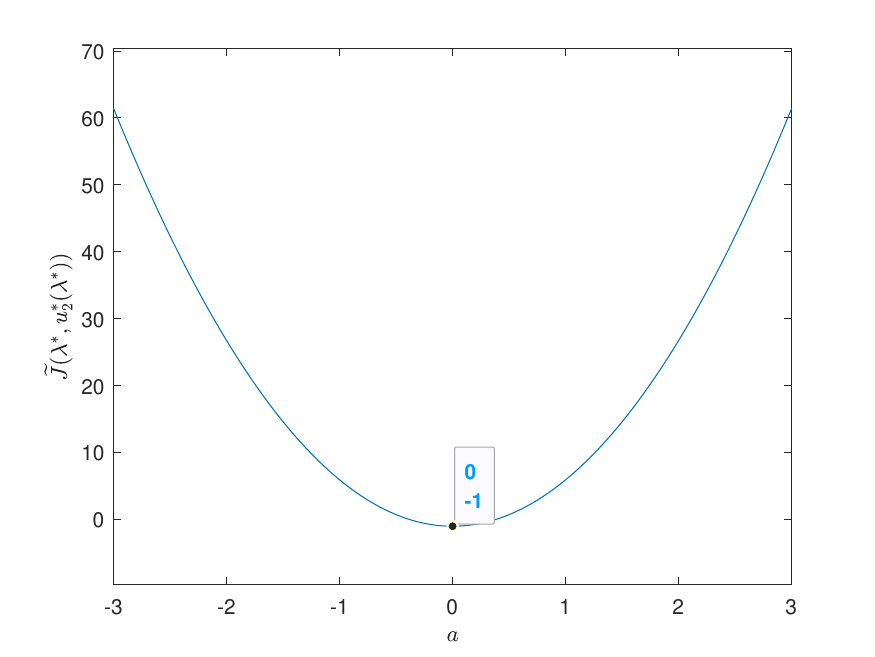}
\caption{Change in $\widetilde{J}(\lambda,u_2^*(\lambda))$ with $a$.}
\label{Fig.1}
\end{figure}
Figure 1 shows that the value function $\widetilde{J}(\lambda^*,u_2^*(\lambda^*))$ is strictly concave with respect to $a$ and reaches its minimum when $a=0$, i.e., the constraint has no impact on both $\widehat{Z}$ and the  feedback Stackelberg equilibrium.
\end{example}

\begin{example}
For the Stackelberg game described by \eqref{60} and \eqref{61}, we further consider the following equality constraint:
$$
\mathbb{E}\left[\int_0^1(1-e^{-t})X^*(t)+2u_1^*(t)+2e^{-t}u_2(t)dt\right]\leq a,
$$
where $a$ is a parameter. Obviously, the constraint meets the condition $(\mathbf{H4})$. Similar as Example \ref{+11}, we know that
$$\widetilde{J}(\lambda,u_2^*(\lambda))=-1-2a\lambda-\frac{1-e^{-2}}{2}\lambda^2$$
and  the   feedback Stackelberg equilibrium  $(\widehat{u}_1,\widehat{u}_2)$ is given by
\begin{equation*}
\begin{cases}
\widehat{u}_1=\frac{1}{2}\begin{bmatrix}
1&1-e^{-t}
\end{bmatrix}\widehat{Z},\\
\widehat{u}_2=\frac{1}{2}\begin{bmatrix}
1&1
\end{bmatrix}\widehat{Z}-\frac{e^{-t}}{2}\lambda^*
\end{cases}
\end{equation*}
with
$$\lambda^*=\frac{2a}{e^{-2}-1}\vee0=\max\left\{\frac{2a}{e^{-2}-1},0\right\},$$
where the state $\widehat{Z}$ satisfies the SDE
\begin{equation*}
\begin{cases}
d\widehat{Z}=\left[\begin{bmatrix}
-1&2-e^{-t}\\
-1&0
\end{bmatrix}\widehat{Z}+
\begin{bmatrix}
-e^{-t}(\frac{2a}{e^{-2}-1}\vee0)\\
\frac{2a}{e^{-2}-1}\vee0
\end{bmatrix}\right]dt+\widehat{Z}dB_t, \quad t\in[0,1]\\
\widehat{Z}(0)=
\begin{bmatrix}
1\\
0
\end{bmatrix}.
\end{cases}
\end{equation*}
\begin{figure}[htbp]
\centering
\includegraphics[height=7cm, width=10cm]{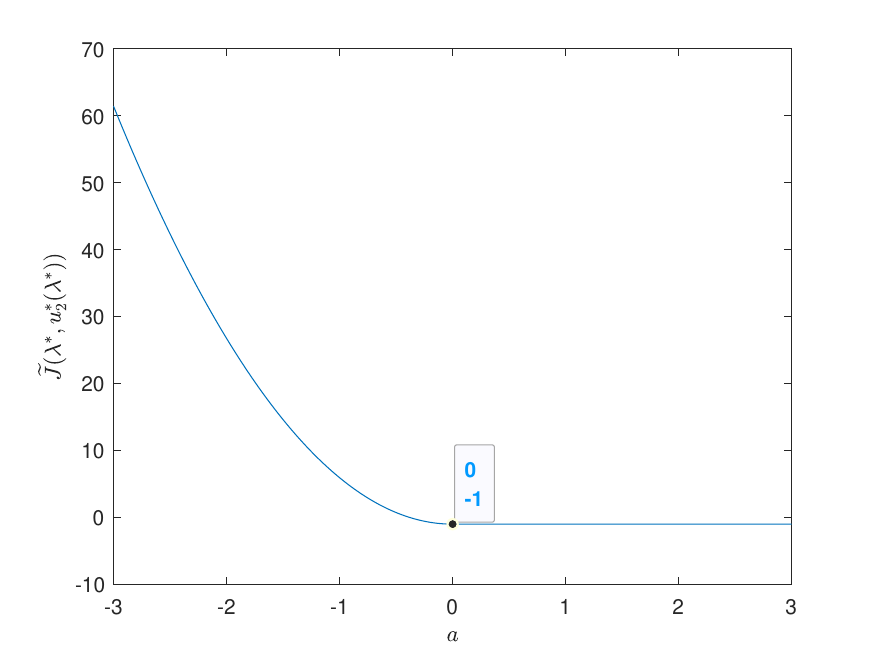}
\caption{Change in $\widetilde{J}(\lambda,u_2^*(\lambda))$ with $a$.}
\label{Fig.2}
\end{figure}
Figure 2 shows that the value function $\widetilde{J}(\lambda^*,u_2^*(\lambda^*))$ is gradually reducing with the growth of $a$, and achieves its minimum when $a\leq0$, which indicates that strengthening the constraints results in a growth of the cost.
\end{example}

\section{Conclusions}
This paper is devoted to studying the LQ-SSDG-AC. By using the Pontryagin maximum principle  and constructing SREs, the feedback optimal strategies are obtained for the follower's problem and the leader's relaxed problem in sequence. Then by employing Lagrangian duality theory, the dual problem of the  leader's problem is obtained, and the state feedback form of the open-loop  optimal strategy is given for the dual problem. After that, the strong duality of the dual problem is obtained under the Slater condition, and a new positive definite condition is proposed for ensuring the uniqueness of solutions to the leader's problem. In addition, the KKT condition is established  for solving the feedback Stackelberg equilibrium of the LQ-SSDG-AC. Finally, two non-degenerate  examples with indefinite coefficients are provided to illustrate the effectiveness of the main results.

It is worth mentioning that some extensions of our problem formulation, including the LQ-SSDG-AC with jumps/partial information \cite{moon2021linear, moon2023linear, xiang2024stochastic, zheng2022linear}, promise to be interesting and important topics. Moreover, the positive definiteness of $S_0$ given  by \eqref{+12} also calls for further research. We plan to address these problems as we continue our research.

\bibliographystyle{plain}
\bibliography{References}

\end{document}